\theoremstyle{plain}
\newtheorem{theorem}{Theorem}[section]
\newtheorem{lemma}[theorem]{Lemma}
\theoremstyle{definition}
\theoremstyle{remark}
\def\subsection{\@startsection{subsection}{2}%
  \z@{.7\linespacing\@plus.7\linespacing}{.5\linespacing}%
  {\normalfont\normalsize\bfseries}} % 제목 글씨는 굵게
\begin{document}

% 문서 정보
\title[Undecidability of 5-polyomino translation tiling problem]
{Undecidability of tiling the plane with a set of \\ 5 polyominoes}

\author{Yoonhu Kim}
\address{\parbox[t]{\textwidth} {Yoonhu Kim \\ Gyeonggi Science High School \\ Gyeonggi-do 16297, Korea} }
\email{rumstis@gmail.com}

\subjclass[2020]{Primary 05B50, 05B45, 52C20, 68Q17}
\keywords{polyomino, tiling, Wang tiles, undecidability}

\begin{abstract}
In this paper, we give a proof that it is undecidable whether a set of five polyominoes can tile the plane by translation.
The proof involves a new method of labeling the edges of polyominoes,
making it possible to assign whether two edges can match for any set of two edges chosen.
This is achieved by dedicating 1 polyomino to the labeling process.
\end{abstract}

\maketitle

\section{Introduction}

Given a finite set of tiles, deciding whether they can tile the plane is an important problem.
One of the earliest researches on the decidability of tilings was the study on Wang tiles \cite{HW} by Hao Wang in 1961.
Wang tiles are unit squares with each edge given a color(Figure $1$ shows an example).

\begin{figure}[ht]
\centering
\includegraphics[width=0.25 \textwidth]{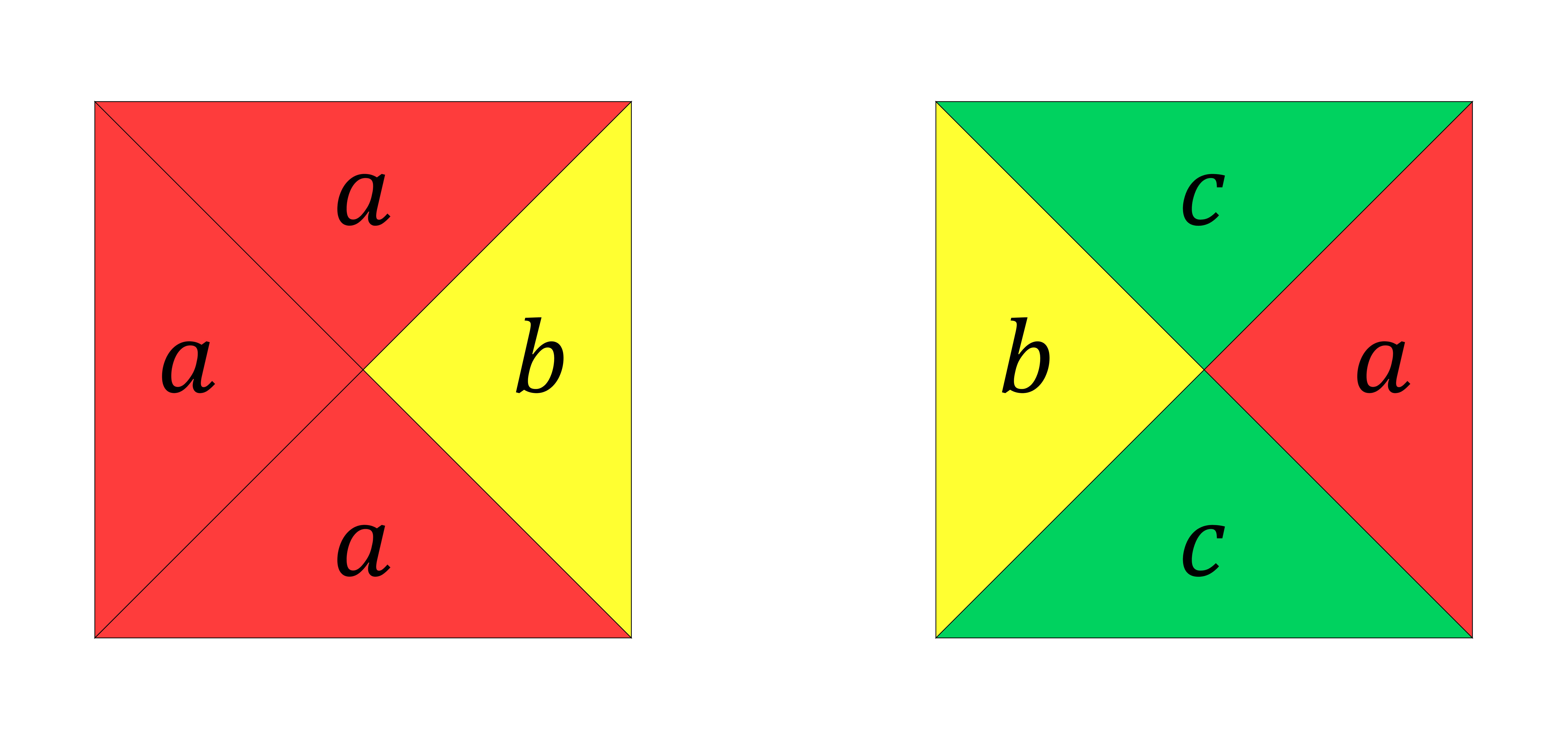}
\caption{A set of 2 Wang tiles}
\label{fig:Figure1}
\end{figure}

The problem known as the {\it domino problem} is the following: given a finite set of Wang tiles,
decide if they can tile the plane under the rule that matching edges must share the same color.
Berger proved that the domino problem is undecidable by simulating a Turing machine with Wang tiles \cite{RB}.
These Wang tiles can tile the plane if and only if the simulated Turing machine does not halt.

By reduction from the domino problem to tiling with a set of polyominoes, Golomb 
proved that it is undecidable whether a finite set of polyominoes can tile the plane \cite{SW}.
This is done by encoding the colors of the Wang tiles using unique dents and bumps on a large square polyomino.
This construction requires the same number of polyominoes as the number of Wang tiles.

Ollinger considered the problem of tiling the plane with $k$ polyominoes by translation, where $k$ is a fixed integer.
This is known as the $k$-Polyomino translation tiling problem.
For $k=1$, this problem has been proven to be decidable(Wijshoff and van Leeuwen \cite{WL}).
Ollinger proved that the problem is undecidable for $k = 11$ by reduction of the domino problem \cite{NO}.
Yang and Zhang improved this result by showing that the problem remains undecidable for $k = 8$ \cite{YZ}, then for $ k=7$ \cite{YZ1}.
In this paper, we show that the problem remains undecidable for $k = 5$.

We may also consider the problem of tiling the plane with $k$ tiles, allowing rotations.
Ollinger proved that the problem is undecidable for $5$ polyominoes \cite{NO}.
Demaine and Langerman showed that the problem is undecidable for $3$ polygons \cite{ES}, in which the polygons are required to rotate in arbitrarily many directions.
This paper shows that the problem is undecidable for $3$ polyominoes.
Since the result only requires the tiles to rotate in $3$ different directions, it slightly improves the result by Demaine and Langerman.

In Section 2, we introduce a new method of adding dents and bumps to polyominoes.
In Section 3, we prove the undecidability of the $5$-Polyomino translation tiling problem. 
In Section 4, we discuss the significance of this result and how it may be expanded.

\section{Labeling of polyominoes} %chapter2

Labeling polyominoes is the process of scaling up a set of polyominoes and adding relatively small dents or bumps.
This technique has been used in various papers as a way to enforce certain matching rules, allowing for more complex tilings \cite{NO, YZ, YZ1}.
However, as we show in Section 2.1, the labeling methods used in previous papers have several restrictions.

\subsection{Polyominoes with a matching rule} %chapter 2.1

For a polyomino set $P$, let $G$ be an undirected graph with the edges of $P$ as nodes.
$P_G$ is a set of polyominoes equal to ${P}$ where $2$ edges can only match if they are adjacent in $G$.
In other words, in $P_G$, the graph $G$ assigns a matching rule to $P$.
This is a generalization of the matching rules used in previous papers. 

Edges of certain $P_G$’s can be constructed using dents and bumps. Figure $2$ shows $2$ examples.

\begin{figure}[ht]
\centering
\includegraphics[width=1 \textwidth]{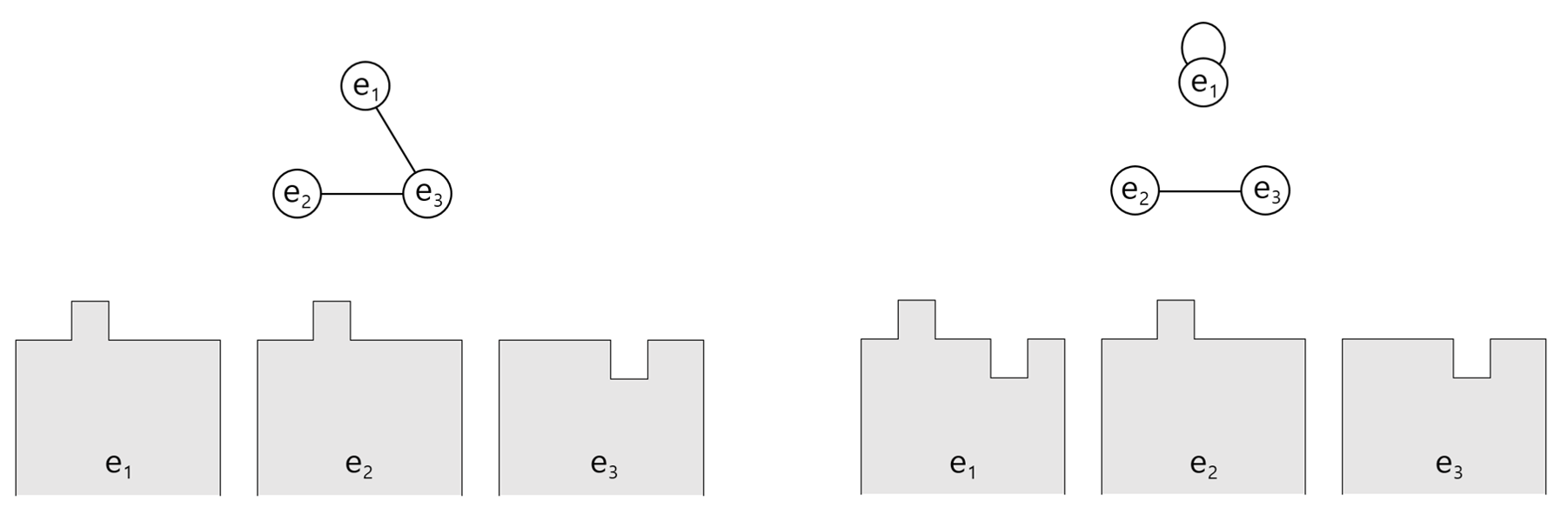}
\captionsetup{width=\textwidth}
\caption{2 sets of labeled edges, each corresponding to a different $G$}
\label{fig:Figure2}
\end{figure}

Using this method, an edge can only match another edge with an identical border.
This prevents the construction of most $P_G$’s.
For example, the graph in Figure $3$ cannot be encoded by dents and bumps as a matching rule:
$e_3$ can match with both $e_1$ and $e_2$, forcing $e_1$ and $e_2$ to be identical.
However, $e_1$ can match with itself while $e_2$ cannot, leading to a contradiction.

\begin{figure}[ht]
\centering
\includegraphics[width=0.18 \textwidth]{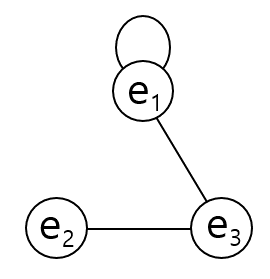}
\caption{ A graph with three nodes $e_1$, $e_2$ and $e_3$}
\label{fig:Figure3}
\end{figure}

In this paper, we use an additional polyomino to fill in gaps when $2$ edges do not match precisely.
We first take a look at the shapes of the labels, then show that these labels can encode ${P}_G$ for an arbitrary $G$.

\subsection{\boldmath Keys and locks } %chapter 2.2

A \emph{tooth} is a $9$-omino in the shape of a cross(Figure 4). The shape and size of a \emph{tooth} does not change.

\begin{figure}[ht]
\centering
\includegraphics[width=0.18 \textwidth]{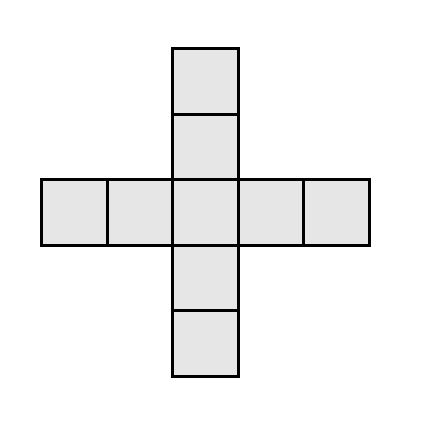}
\caption{A \emph{tooth}}
\label{fig:Figure4}
\end{figure}

A \emph{key} is a bump on an edge(Figure 5). The shape of a \emph{key} is decided by two factors: length and index.
A \emph{key} which has length $l$ and index $k$ is a $(6l-2) \times 1$ rectangle with a single \emph{tooth}
attached on the $(6k-2)$th square(starting from the square closest to the edge).

\begin{figure}[ht]
\centering
\includegraphics[width=0.37 \textwidth]{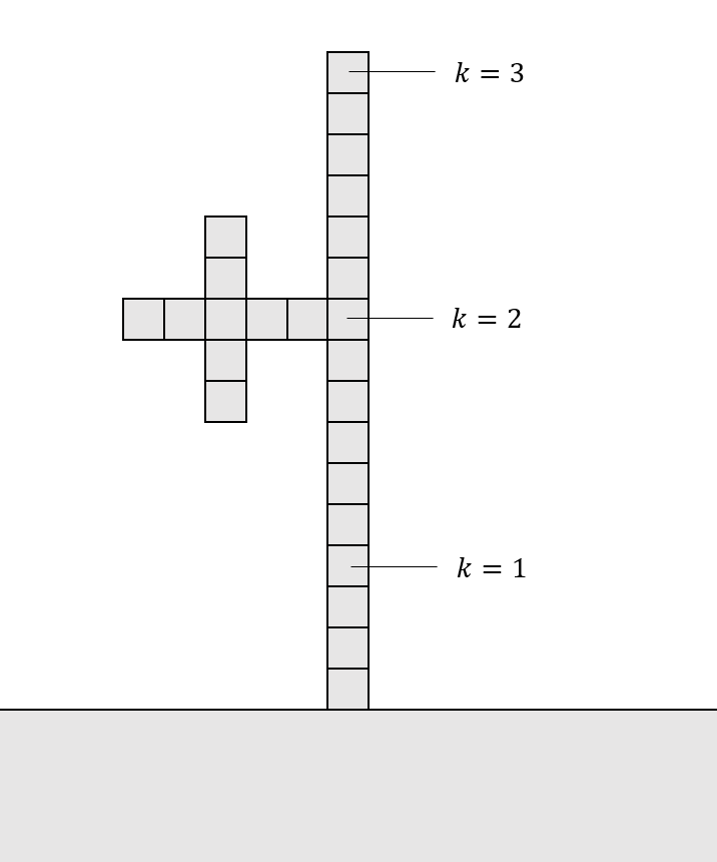}
\caption{A \emph{key} with length 3 and index 2}
\label{fig:Figure5}
\end{figure}

A \emph{lock} is a dent on an edge(Figure 6). The negative space of a \emph{lock} is similar to a \emph{key}, but instead of an index,
a \emph{lock} is decided by an index set.
Negative space of a \emph{lock} which has length $l$ and index set $S$ is a $(6l-2) \times 1$ rectangle with a \emph{tooth}
attached on the $(6x-2)$th square(starting from the square closest to the edge), for every $x \in S$.

\begin{figure}[ht]
\centering
\includegraphics[width=0.3 \textwidth]{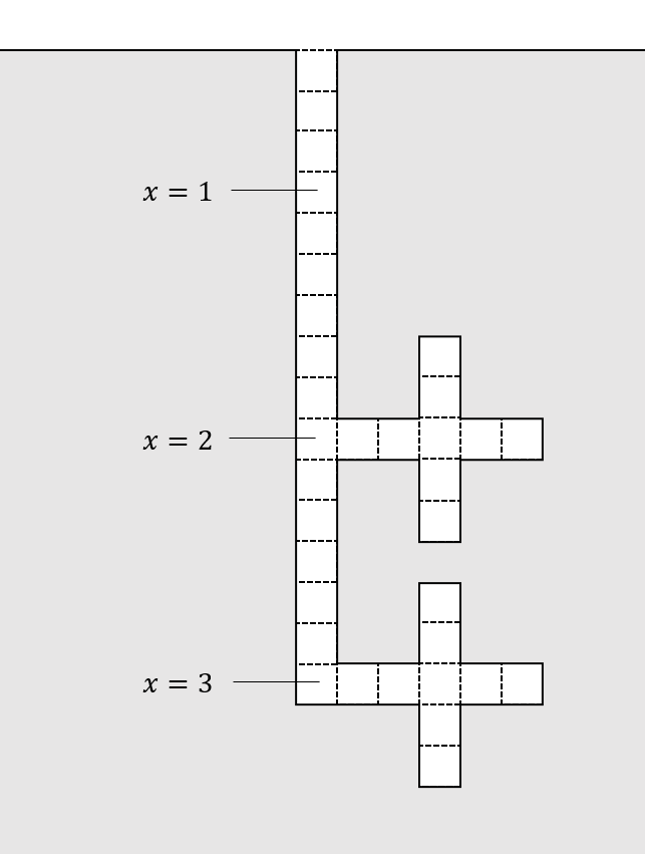}
\caption{A \emph{lock} with length 3 and index set $\{2,3\}$}
\label{fig:Figure6}
\end{figure}

\begin{lemma} %lemma2.1
It is impossible to tile the plane with only teeth.
\end{lemma}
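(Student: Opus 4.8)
The plan is to argue by contradiction, exploiting the fact that the cross-shaped $9$-omino forces a very rigid local structure wherever it is placed. I will use that the tooth has a well-defined \emph{center} --- the unique cell of the cross adjacent to four other cells of the tooth --- so that in any placement the center determines the whole tooth. Suppose the teeth tile the plane, fix one tooth $T$, and after translating assume its center is at the origin, so that $T$ occupies the nine cells $(0,0)$, $(\pm1,0)$, $(\pm2,0)$, $(0,\pm1)$, $(0,\pm2)$.

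The argument then proceeds by inspecting three nearby cells in turn. First, consider the cell $(1,1)$, a diagonal neighbour of the center that is not in $T$. The tooth covering it has its center among the nine cells $(1,1)+v$ with $v\in\{(0,0),\pm(1,0),\pm(2,0),\pm(0,1),\pm(0,2)\}$, and a direct check shows that all of these except $(3,1)$ and $(1,3)$ give a tooth overlapping $T$. Since $T$ and the cross are both symmetric under reflection in the line $y=x$, I may assume the covering tooth $A$ has center $(3,1)$. Second, running the same analysis for the cell $(1,2)$ --- now also forbidding overlaps with $A$ --- forces the tooth $B$ covering it to have center $(1,4)$. Third, consider the cell $(2,2)$: enumerating the nine candidate centers of a tooth that could cover it, each candidate is either a cell already occupied by $T$, $A$, or $B$, or else yields a tooth overlapping one of $T$, $A$, $B$. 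Hence $(2,2)$ cannot be covered, a contradiction, so no tiling by teeth exists. (Because the cross is invariant under every rotation and reflection, the same conclusion holds even when rotations and reflections are allowed, though only the translation version is needed later.)

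The three overlap checks are finite and amount to routine coordinate bookkeeping --- each is most convincingly certified by a small picture of $T$, $A$, $B$ in position --- so the only genuine decision is which cells to inspect and in what order; the crux is noticing that once $A$ and $B$ are forced, the single cell $(2,2)$ is already trapped. I therefore expect the main obstacle to be presenting the case analysis compactly rather than any conceptual difficulty: there is no deep idea, but one must make sure the enumeration of the nine candidate centers is exhaustive at each of $(1,1)$, $(1,2)$, $(2,2)$ and that each claimed overlap is stated correctly, since one missed case would leave a gap.
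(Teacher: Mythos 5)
Your proof is correct and is essentially the paper's own argument in expanded form: the cell $(1,1)$ is the paper's ``inward corner'' of the tooth, which you show can only be covered by the outermost square of an arm of a second tooth, and the cells $(1,2)$ and $(2,2)$ are exactly the two-square dent that the paper observes cannot both be filled. You simply carry out explicitly the finite overlap checks that the paper delegates to its figure.
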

\begin{proof}
The inward corner of a \emph{tooth}(the location of the left dot in Figure $7$) can only be filled by the outermost square of another \emph{tooth}.
This creates a dent of $2$ unit squares(the location of the right two dots in Figure $7$), which cannot be both filled.
\end{proof}

\begin{figure}[ht]
\centering
\includegraphics[width=0.3 \textwidth]{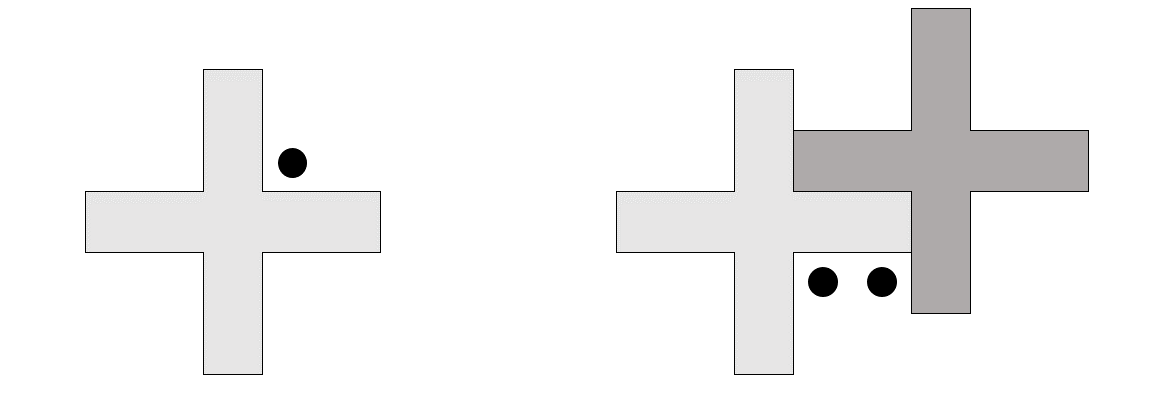}
\caption{Tiling possibilities of a \emph{tooth}}
\label{fig:Figure7}
\end{figure}

\begin{lemma} %lemma 2.2
A key and a lock can match if and only if the index of the key belongs in the index set of the lock.
\end{lemma}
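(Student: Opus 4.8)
The plan is to analyze the two possible ways a \emph{key} of length $l$ and index $k$ can sit inside a \emph{lock} of length $l'$ and index set $S$, and show that a tiling of the resulting gap is possible exactly when $k \in S$. First I would observe that the bounding rectangle of the key is $(6l-2)\times 1$ and the negative space of the lock is $(6l'-2)\times 1$; since both rectangles have width $1$, the key must slide straight into the lock along the edge, so there are only a handful of horizontal offsets to consider. I expect that in fact $l = l'$ will be forced (or the relevant matching cases reduce to this), so the key's rectangular body exactly fills the lock's rectangular body and the only freedom is whether the key's single \emph{tooth}, sitting on the $(6k-2)$th square, lines up with one of the lock's teeth at positions $\{6x-2 : x\in S\}$.

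Next I would split into two cases. \emph{Case 1: $k \in S$.} Then the key's tooth occupies exactly one of the tooth-shaped notches of the lock, and the remaining notches of the lock (at positions $6x-2$ for $x \in S\setminus\{k\}$) are each a lone \emph{tooth}-shaped cavity that can be plugged by one free \emph{tooth} tile; what remains after that is a union of $1\times 1$ strips which tile trivially, so the gap can be filled and the key and lock match. \emph{Case 2: $k \notin S$.} Then the key's tooth pokes into a place where the lock's negative space is only $1$ unit deep, so it cannot be accommodated at all — the key simply does not fit in the lock — and conversely, wherever the lock expects a tooth, the key presents a flat $1\times 1$ square, leaving a \emph{tooth}-shaped cavity of the lock adjacent to the flat body of the key; I would then invoke Lemma (the tooth lemma) together with the geometry of Figure $7$ to argue that this cavity cannot be filled: filling the inward corner of the lock's tooth-notch forces the outermost square of a new \emph{tooth}, which creates the unfillable $2$-square dent exactly as in the proof that teeth alone cannot tile the plane. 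Hence no matching is possible when $k\notin S$.

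The main obstacle I anticipate is the bookkeeping in Case 2: I must rule out \emph{every} horizontal offset of the key within the lock, not just the aligned one, and show that an unfillable configuration always arises — either the key's tooth has nowhere to go, or some tooth-notch of the lock is left with its characteristic cavity. The factor of $6$ in the spacing (so that teeth are separated by several empty squares, with the $-2$ offset pinning the tooth away from the rectangle's ends) is presumably chosen precisely so that teeth from the key and lock can never interlock in any shifted position and so that a misaligned tooth always lands against a flat wall; I would make this spacing argument carefully and then reduce each surviving sub-case to Lemma on teeth. Once that reduction is in place, the ``only if'' direction follows, and combined with the explicit fill in Case 1 this gives the claimed equivalence.
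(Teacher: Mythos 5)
Your proposal follows essentially the same route as the paper's (much terser) proof: the key's tooth must land in one of the lock's tooth-shaped notches, forcing $k\in S$, and conversely when $k\in S$ the key fits with the leftover notches plugged by free \emph{tooth} tiles. One aside in your Case 2 is off — an isolated tooth-shaped notch of the lock \emph{can} be filled by a single free \emph{tooth} (that is exactly what happens in your Case 1 for the indices in $S\setminus\{k\}$), so the appeal to Lemma 2.1 there is misplaced — but this does not matter, since your primary observation that the key's own tooth collides with solid lock material already gives the ``only if'' direction.
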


\begin{proof}
For a \emph{key} to be able to match a \emph{lock}, the \emph{tooth} attached to the \emph{key} must be positioned in
one of the \emph{tooth}-shaped holes in the \emph{lock}. Therefore, the index set of the \emph{lock} must include the index of the \emph{key}.
If the index set of the \emph{lock} includes the index of the \emph{key}, the \emph{key} can be placed inside the \emph{lock}, possibly leaving holes.
These holes can then be filled by the \emph{tooth}.
\end{proof}

\subsection{KL labeling} %chaper 2.3

Given a set of polyominoes $P$, by scaling the polyominoes by a sufficiently large factor,
it is possible to label the center of every edge with one \emph{key} and one \emph{lock} of the same length.
$KL$ labeling of $P$ is a set of polyominoes obtained by applying this form of labeling to $P$,
in addition to a \emph{tooth}(Figure $8$).
We will denote $KL$ labeling of $P$ as $KL(P)$.
The index of the \emph{key} on edge $e$ will be denoted $k_e$,
and the index set of the \emph{lock} on edge $e$ will be denoted $S_e$.

Throughout this paper, if a polyomino $p$ belongs in $KL(P)$, an edge of $p$ refers to the labeled edge as a whole.(We do not consider
edges of single squares in $p$.)

\begin{figure}[ht]
\centering
\includegraphics[width=0.3 \textwidth]{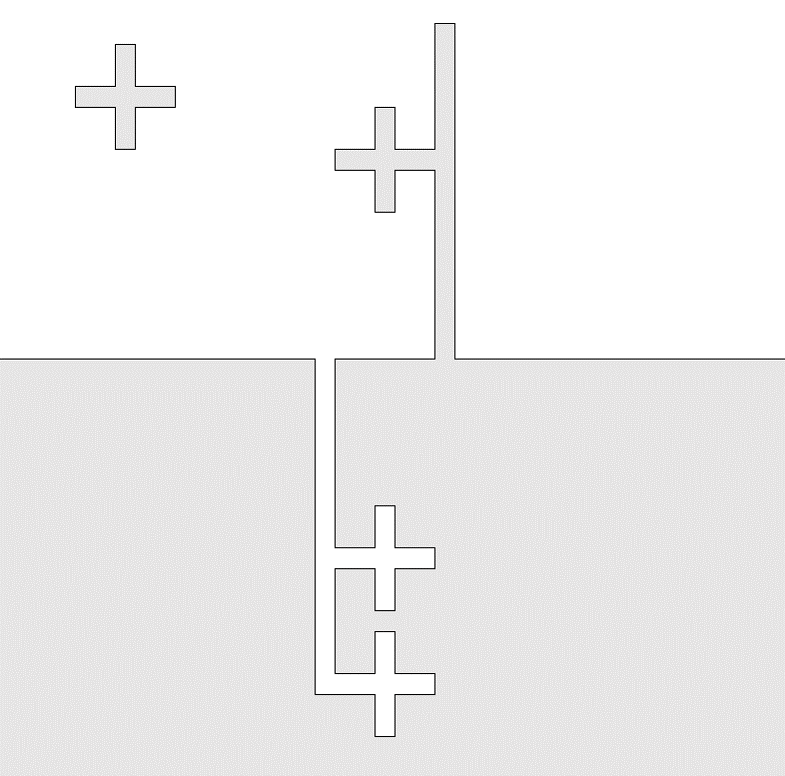}
\captionsetup{width=0.8 \textwidth}
\caption{Example of a $KL$ labeling on a polyomino. The figure only shows one edge from the original polyomino.
The edge is labeled with a \emph{key} of index 2 and a \emph{lock} of index set \{2, 3\}.}
\label{fig:Figure8}
\end{figure}

\begin{lemma} %lemma 2.3
For a polyomino set $P$, $KL(P)$ only permits tilings that correspond to a tiling by $P$.
\end{lemma}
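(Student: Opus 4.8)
The plan is as follows. Take an arbitrary tiling $\mathcal{T}$ of the plane by $KL(P)$; call the scaled, labelled copies of polyominoes of $P$ the \emph{blocks} of $\mathcal{T}$ and call the remaining tiles its \emph{teeth}. I want to establish two structural facts: (a) every \emph{tooth} of $\mathcal{T}$ lies inside a \emph{lock} cavity of some block, plugging one of that \emph{lock}'s \emph{tooth}-shaped holes; and (b) each edge of each block runs flush, along its whole length, against exactly one edge of another block, with each of the two \emph{keys} seated in the opposite \emph{lock}. Given (a) and (b), deleting every \emph{tooth}, erasing every \emph{key} and \emph{lock}, and dividing all coordinates by the scaling factor turns each block back into its underlying polyomino of $P$; these polyominoes then cover the plane with no gaps and no overlaps and meet edge to edge, i.e.\ they form a tiling of the plane by $P$ — the tiling that $\mathcal{T}$ ``corresponds to''.

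The core of the argument is a local analysis of $\mathcal{T}$ near the boundary of a block. First I would observe that a block is enormous compared with a \emph{tooth} (and with a label), so a block can never occupy one of the narrow, at most one cell wide slots that appear next to an isolated \emph{tooth}: such slots must be filled by further teeth. Using this together with the mechanism behind the lemma that the plane cannot be tiled by teeth alone — a concave corner of a \emph{tooth} can only be filled by the outer tip of another \emph{tooth}, which spawns a two-cell dent that then cannot be closed — I would show that a \emph{tooth} touching a block along any straight stretch of boundary that is not the wall of a \emph{lock} cavity, or a \emph{tooth} adjacent to the protruding rod of a \emph{key} that is not seated in a \emph{lock}, forces a configuration that cannot be completed. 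Hence the only \emph{keys} present are those seated inside \emph{locks}, and the only teeth present are those plugging the leftover holes of \emph{locks}; and by the previous lemma, a \emph{key} on edge $e$ seated in a \emph{lock} on edge $f$ forces $k_e \in S_f$.

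Once this local statement is in hand, (a) and (b) follow by global bookkeeping. For an edge $e$ of a block $p$, the two straight stretches flanking its central label must, by the local statement, be abutted by blocks; since each edge of each block carries its label at the centre, a single block $q$ must run flush against all of $e$ (otherwise some block would present a \emph{key} to a flat, \emph{lock}-free stretch of $p$, which the local statement forbids), and then the facing edge $f$ of $q$ coincides with $e$. The \emph{key} of $e$ sits in the \emph{lock} of $f$ and vice versa — two bumps cannot interpenetrate, and the straight channel of a \emph{lock} receives at most one \emph{key} — so ``glued to'' is a perfect matching on the block edges, and all teeth are accounted for by (a). The step I expect to be the real obstacle is the local analysis itself: one has to track, with some care, the chain of small cells forced to be teeth around a misplaced \emph{tooth} or \emph{key}, verify that this chain never terminates legally and never makes room for a block, and thereby reduce to the unfillable two-cell dent of the earlier lemma. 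Checking the exact combinatorics of a \emph{key} sitting inside a \emph{lock} (the purpose of the spacing $6$ and the rod length $6l-2$, so that the residual holes are precisely \emph{tooth}-shaped and pairwise disjoint) is the remaining, more routine, piece.
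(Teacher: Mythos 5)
Your overall goal is right and your plan can be made to work, but you take a noticeably different and more laborious route than the paper, and you defer exactly the step that carries all the content. The paper's proof pivots on one observation you did not make: the innermost square of a \emph{lock} sits at the far end of a channel of width $1$ and depth $6l-2$, so it cannot be reached by a \emph{tooth} (whose arms penetrate a unit-wide channel by only one cell) and can only be covered by the \emph{key} of another labeled polyomino. Starting from Lemma 2.1 (some labeled polyomino $p_0$ must occur), this single fact immediately forces every edge of $p_0$ to be met by a neighboring labeled polyomino, and the fixed relative placement of the \emph{key} and \emph{lock} at the center of each edge forces that neighbor to be exactly aligned; propagating outward puts all labeled polyominoes on a lattice of side $\alpha$, with \emph{teeth} accounted for as the plugs of the residual holes in matched \emph{locks}. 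Your proposal instead works by exclusion from the outside in: you rule out \emph{teeth} accumulating along flat stretches of a block's boundary and around unseated \emph{keys}, and only then conclude that neighbors must be aligned blocks. That can be carried through (a \emph{tooth} abutting a flat wall creates a unit notch bounded on three sides whose only filler is another \emph{tooth}'s arm tip, whose body then collides with the wall --- the same mechanism as Lemma 2.1), but it requires more case analysis than the paper needs, and you explicitly leave this local analysis as a to-do rather than executing it. If you adopt the paper's observation about the innermost \emph{lock} square, your facts (a) and (b) fall out directly and the exclusion argument becomes unnecessary.
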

\begin{proof}
Let us consider tiling the plane with $KL(P)$. By Lemma 2.1, a labeled version of $P$ must be used at least once.
Let us call one such polyomino $p_0$. Every edge of $p_0$ consists of a \emph{key} and a \emph{lock}.
The innermost square of the \emph{lock}(the location of the dot in Figure $9$)
can only be filled by a \emph{key}. Therefore, every edge of $p_0$ must be adjacent to another labeled version of $P$.
The placement of the \emph{keys} and \emph{locks} forces adjacent polyominoes to be aligned(Figure $10$).
Repeating this process, every labeled version of $P$ in the tiling must lie on a lattice of side length $\alpha$,
where $\alpha$ is the scaling factor in the process of constructing $KL(P)$.
Therefore, any tiling by $KL(P)$ must correspond to a tiling by $P$.
\end{proof}

\begin{figure}[ht]
\centering
\includegraphics[width=0.3 \textwidth]{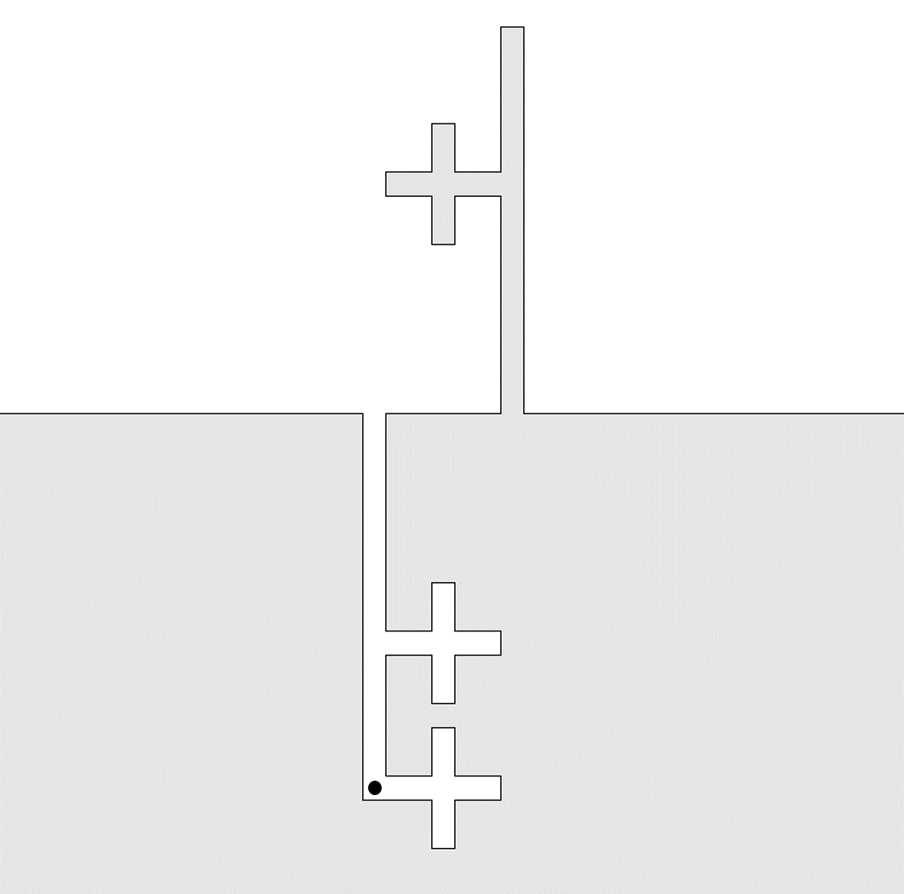}
\caption{Tiling possibilities of a \emph{lock}}
\label{fig:Figure9}
\end{figure}

\begin{figure}[ht]
\centering
\includegraphics[width=0.5 \textwidth]{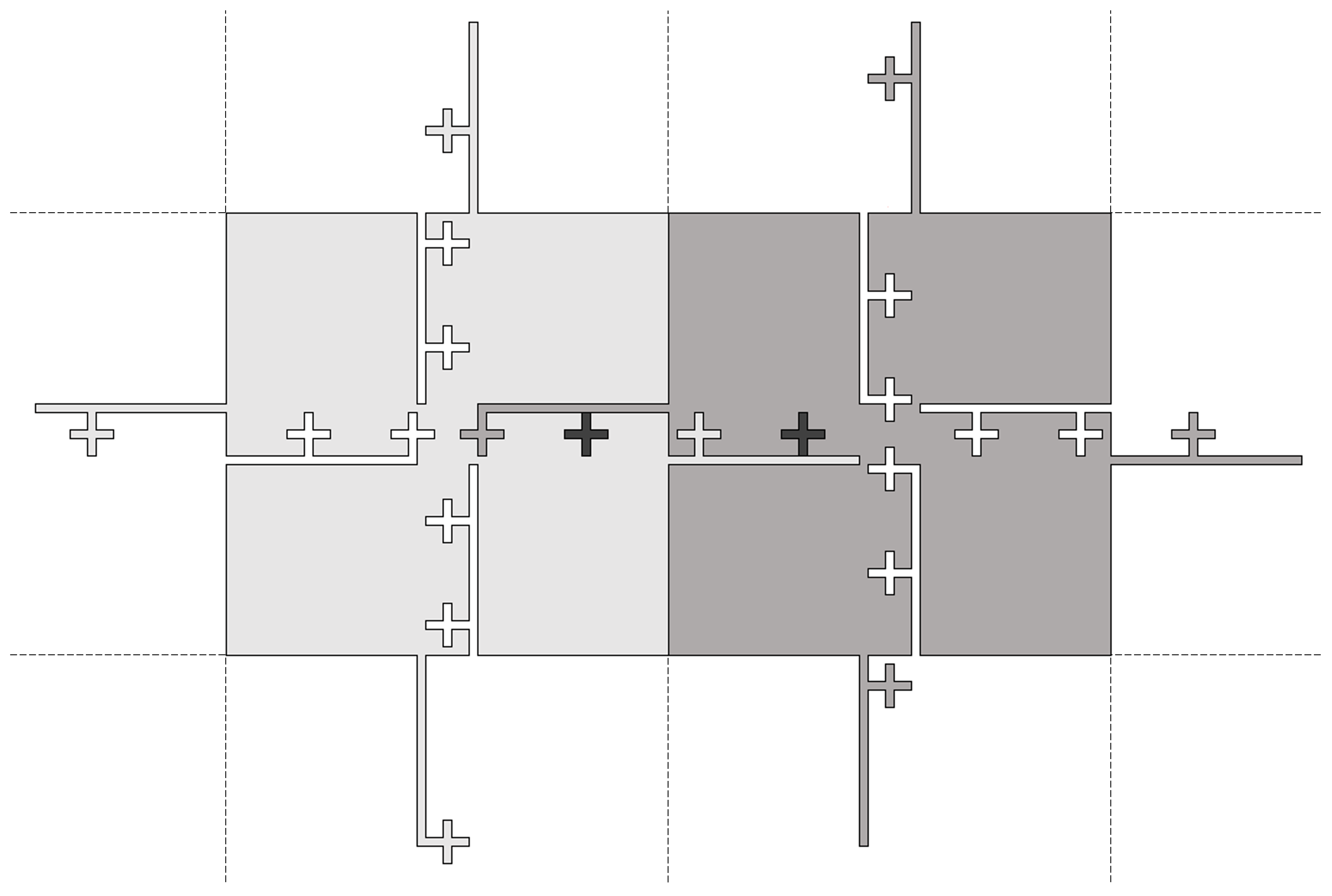}
\captionsetup{width=0.65 \textwidth}
\caption{Adjacent edges in the tiling of $KL(P)$, where $P$ only consists of a monomino}
\label{fig:Figure10}
\end{figure}

\begin{theorem} %theorem 2.4
For a given $P_G$, it is always possible to construct $KL(P)$ such that
the set of tilings by $KL(P)$ has a one-to-one correspondence with the set of tilings by $P_G$.
\end{theorem}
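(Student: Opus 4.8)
The plan is to realize the graph $G$ through a careful choice of the indices $k_e$ and index sets $S_e$ attached to the edges of $P$ under $KL$ labeling. By Lemma 2.4, any tiling of $KL(P)$ already forces the underlying copies of $P$ onto a common lattice, so the only thing left to control is which pairs of edges are permitted to be adjacent; by Lemma 2.3 (the key–lock matching criterion) an edge $e$ can meet an edge $f$ precisely when the key of one sits inside the lock of the other, i.e. when $k_e \in S_f$ \emph{and} $k_f \in S_e$. So the theorem reduces to the following combinatorial task: assign to each node (edge) $e$ of $G$ an index $k_e$ and an index set $S_e$ so that, for every pair of edges $e,f$ (including the case $e=f$), we have $k_e\in S_f$ and $k_f\in S_e$ if and only if $\{e,f\}$ is an edge of $G$.

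First I would give distinct indices to the edges: let $e_1,\dots,e_n$ enumerate the edges of $P$, and set $k_{e_i}=i$. Then define $S_{e_i}=\{\,j : \{e_i,e_j\}\in E(G)\,\}$ — that is, the lock on edge $e_i$ admits exactly the keys whose indices correspond to neighbors of $e_i$ in $G$ (including $i$ itself iff $G$ has a loop at $e_i$, which encodes "$e_i$ may match a translated copy of itself"). One then checks the biconditional directly: if $\{e_i,e_j\}\in E(G)$ then $j\in S_{e_i}$ and, by symmetry of $G$, $i\in S_{e_j}$, so by Lemma 2.3 the edges match; conversely if the edges can match then $j\in S_{e_i}$, which by definition of $S_{e_i}$ means $\{e_i,e_j\}\in E(G)$. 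Choose the common length $l$ of every key/lock to be at least $n$ (so that all indices up to $n$ fit on a rectangle of length $6l-2$), and choose the scaling factor $\alpha$ large enough that a key and a lock of this length fit on the center of each edge without interfering with corners or with the labels on neighboring edges — this is exactly the "sufficiently large factor" invoked in the definition of $KL$ labeling.

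With the matching rule thus pinned down, the correspondence between tilings is the final step. Given a tiling of $KL(P)$, Lemma 2.4 produces a tiling of $P$ by reading off the positions of the scaled copies; since adjacent copies in that tiling have abutting edges, those edges match as key–lock pairs, hence are $G$-adjacent, so the tiling of $P$ is in fact a valid tiling of $P_G$. Conversely, given a tiling of $P_G$, scale everything by $\alpha$ and insert the labels; wherever two edges $e,f$ abut we have $\{e,f\}\in E(G)$, so $k_e\in S_f$ and $k_f\in S_e$, and Lemma 2.3 guarantees the key of each fits into the lock of the other, with any residual gaps filled by copies of the \emph{tooth}. This assignment and its inverse are mutually inverse, giving the asserted bijection.

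The main obstacle I anticipate is not the combinatorial encoding — which is essentially forced — but the geometric bookkeeping: one must verify that a single length $l\ge n$ and a single scaling factor $\alpha$ can be chosen \emph{uniformly} so that every edge of every polyomino in $P$ has room at its center for a length-$l$ key and a length-$l$ lock, that these labels never collide with the dents/bumps of adjacent edges or with the convex/concave corners of the polyomino, and that the hole-filling by teeth in the converse direction is always possible and never forced anywhere else (so that no spurious tilings are introduced). Lemmas 2.1–2.4 do the conceptual heavy lifting here; what remains is to state the size constraints explicitly and confirm they are simultaneously satisfiable, which they are since $P$ is finite.
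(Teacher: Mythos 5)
Your proposal is correct and follows essentially the same route as the paper: you set $k_{e_i}$ to distinct indices and define $S_{e_i}$ as the set of key indices of the $G$-neighbors of $e_i$, then invoke the lattice-forcing lemma and the key--lock matching criterion to get the bijection in both directions, which is exactly the paper's argument (your lemma numbers are shifted by one, but the content matches). Your additional remarks on choosing $l$ and the scaling factor uniformly make explicit what the paper leaves implicit in the phrase ``sufficiently large factor.''
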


\begin{proof}
We construct $KL_G(P)$ such that for every edge $e_i$, \\
$S_{e_i} = \{k_{e_j} \mid e_i \text{ and } e_j \text{ are adjacent in } G\}$.
By Lemma 2.3, every tiling by $KL_G(P)$ must correspond to a tiling by $P$.
By Lemma 2.2, two edges $e_i$ and $e_j$ of $KL_G (P)$ can match if and only if $k_{e_i} \in S_{e_j}$ and $k_{e_j} \in S_{e_i}$;
this is equivalent to $e_i$ and $e_j$ being connected in $G$. 
Therefore, every tiling by $KL_G (P)$ corresponds to a tiling by $P$ such that the resulting tiling satisfies the matching rules of $P_G$.
Likewise, every tiling by $P_G$ can be turned into a tiling by $KL_G (P)$ by applying $KL$ labeling.
\end{proof}

\begin{figure}[ht]
\centering
\includegraphics[width=0.75 \textwidth]{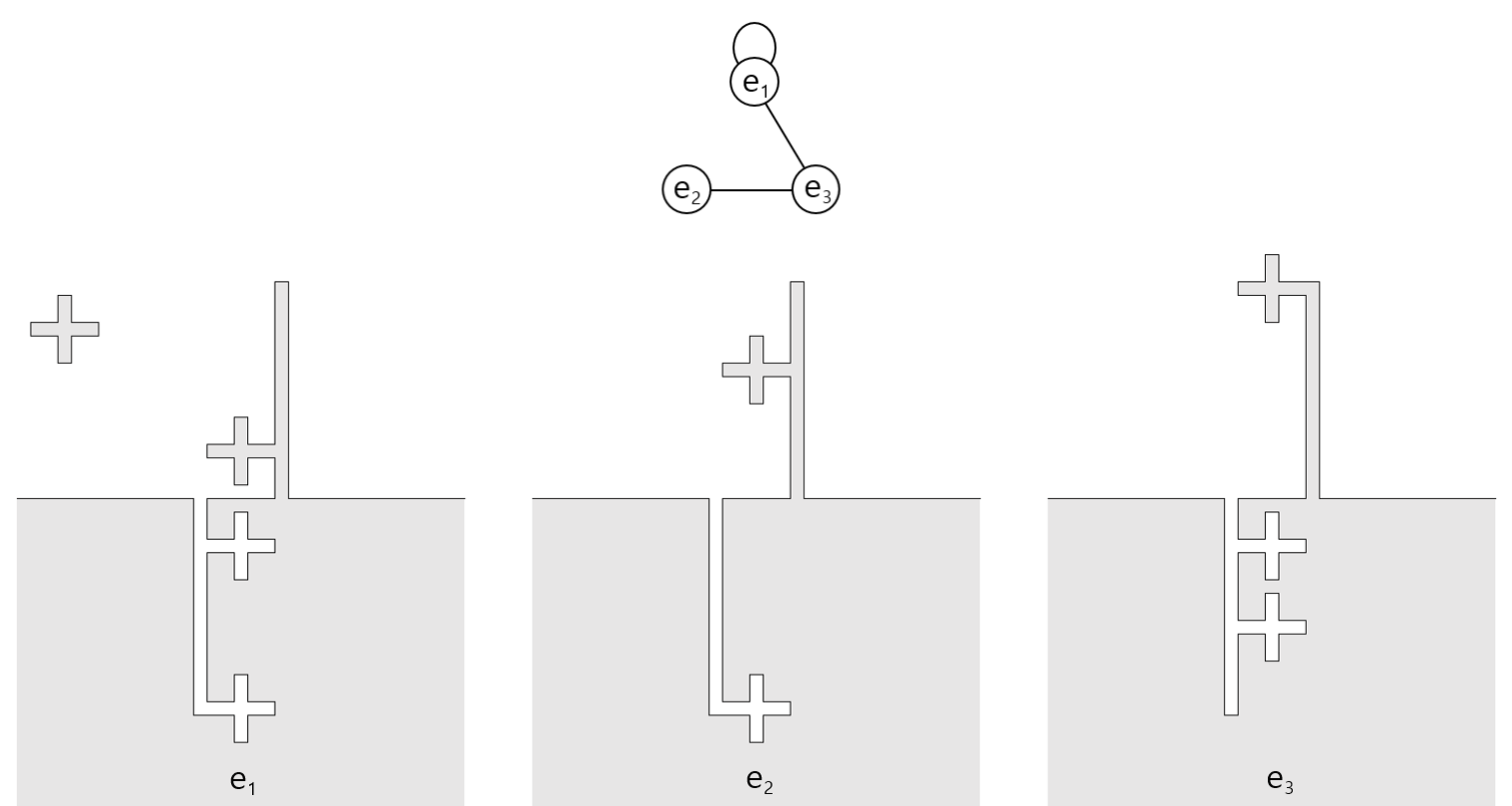}
\captionsetup{width=0.65 \textwidth}
\caption{A graph with three nodes $e_1$, $e_2$ and $e_3$, and the corresponding $KL$ labeling}
\label{fig:Figure11}
\end{figure}

\section{Encoding of Wang tiles} %chapter 3

The conditions for the decidability problem of tiling the plane with polyominoes can be set in two ways.
The $k$-polyomino translation tiling problem asks if it is decidable whether a set of  $k$ polyominoes can tile the plane by translation.
The $k$-polyomino tiling problem asks the same question while the rotation of polyominoes is allowed.
Ollinger proved the undecidability of the $5$-Polyomino tiling problem,
which also gave a proof for the undecidability of the $11$-Polyomino translation tiling problem \cite{NO}.
Similarly, we will give a proof that the $3$-Polyomino tiling problem is undecidable.
This result can then be used to prove the undecidability of the $5$-Polyomino translation tiling problem.

\begin{theorem} %theorem 3.1
The $3$-Polyomino tiling problem is undecidable.
\end{theorem}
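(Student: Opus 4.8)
The plan is to reduce the domino problem to the $3$-Polyomino tiling problem, using the $KL$ labeling machinery (Theorem 2.4) as the engine for encoding Wang-tile colors. Given a finite set of Wang tiles $\tau$, I would first reformulate its tileability as a tiling problem for a set $P_G$ of polyominoes with a matching rule. The natural first attempt is to take $P$ to consist of one square per Wang tile, with the four sides carrying edge-labels for the colors; but since we are allowed rotations, a single square type could be placed in four orientations, so I would instead use a small fixed number of polyomino shapes whose geometry (e.g. an asymmetric ``arrow'' or ``notched square'' shape) breaks all rotational symmetry and forces a fixed orientation in any tiling of the plane. The graph $G$ would then encode the Wang color-matching: a right-label $e$ is adjacent in $G$ to a left-label $e'$ iff the colors agree, and top/bottom likewise, while labels on sides that should never abut are made non-adjacent to everything relevant.

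The second step is to collapse the (a priori large) number of polyomino shapes in this $P_G$ down to $3$. This is where the single dedicated ``labeling'' polyomino advertised in the abstract comes in: rather than giving each Wang tile its own shape, I would use one ``universal carrier'' polyomino that holds the $KL$-labels, together with the \emph{tooth} (which $KL(P)$ always includes as one of its pieces) and possibly one more auxiliary shape, and let the $KL$ labeling itself — via the index/index-set mechanism of Lemmas 2.2 and 2.3 — distinguish the different Wang tiles. Concretely, one encodes the finite list of Wang tiles as a finite list of admissible label-patterns on the universal carrier, and arranges $G$ so that the only globally consistent tilings are those in which each copy of the carrier ``commits'' to one Wang tile and its neighbours' labels are forced to be color-compatible. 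By Theorem 2.4, tilings of the resulting $KL_G(P)$ are in one-to-one correspondence with tilings of $P_G$, hence with tilings of the plane by $\tau$; since the latter is undecidable (Berger \cite{RB}), so is the former. Counting the pieces: the carrier, the \emph{tooth}, and the one extra auxiliary polyomino give the bound $k=3$.

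The main obstacle I anticipate is the rotation issue together with the piece-count bookkeeping: once rotations are allowed, I must make sure that (i) each of the three shapes has trivial rotational stabiliser \emph{as a tiling piece}, or else that any ``rotated'' placement is provably incompatible with completing a tiling — and the \emph{tooth} is a symmetric cross, so the argument of Lemma 2.1 (teeth alone cannot tile) has to be upgraded to control how teeth fill the label gaps even in the presence of rotated carriers; and (ii) that reducing to exactly $3$ shapes does not secretly require the labels themselves to carry orientation information that the $KL$ mechanism cannot provide (it only compares a \emph{key} index against a \emph{lock} index set, symmetrically). I would handle (i) by choosing the carrier shape so that its silhouette forces a unique orientation on the lattice of side length $\alpha$ (as in Lemma 2.3), and by re-running the tooth-filling analysis locally around each \emph{lock}; and (ii) by pushing all the orientation-sensitive content into the geometry of the carrier and the combinatorics of $G$, keeping the $KL$ labels purely as a symmetric ``do these two edges match'' oracle. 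The remaining steps — verifying that the forced lattice structure plus the matching rule exactly reproduces a Wang tiling, and that every Wang tiling lifts back — are routine given Theorem 2.4.
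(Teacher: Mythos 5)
There is a genuine gap at the heart of your proposal: the ``universal carrier'' mechanism is not actually constructible with the $KL$ machinery. A $KL$ labeling bakes a \emph{fixed} key index and lock index set into each edge of each polyomino in the set; a single carrier shape therefore has one fixed label pattern, and cannot ``commit'' to one of $n$ different admissible label-patterns at tiling time. If you encode each Wang tile as a distinct label-pattern on the carrier, you need $n$ distinct polyominoes, which is exactly the situation you set out to avoid. The missing idea --- and the actual engine of the paper's proof --- is \emph{positional selection}: a single long \emph{rod} carries the data of \emph{all} $n$ Wang tiles laid out in $2n$ sections along its long sides, and the choice of which tile a given region represents is made by the horizontal offset at which that rod sits inside a rectangular frame formed by the third piece (the \emph{blade}), whose label $X'$ can align with any one of the $n$ labels $X_{01}$ on the rod. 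Only the sections left exposed by that offset are ``read''; the rest are neutralized by matching against $N$ labels on neighbouring rods. Color matching between adjacent simulated tiles is then enforced by short \emph{wire} copies of the same rod shape, whose two ends carry labels $0$ and $1$ in swapped positions, so that the two abutting sections are forced to encode the same color. None of this appears in your outline, and your third piece (the ``auxiliary shape'') is given no role, whereas in the paper the blade is essential both for building the frames and for performing the selection.

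A secondary point: your instinct to break rotational symmetry and force a single orientation works against the construction rather than for it. The paper deliberately \emph{exploits} rotation --- the one rod shape serves as \emph{meat}, \emph{filler}, and \emph{wire} precisely because it is placed both horizontally and vertically, and the blade appears in two orientations; this is what gets the count down to $3$. (Suppressing rotations is what Theorem 3.3 does, at the cost of needing $3$ rod orientations and hence $5$ pieces.) Finally, the reduction cannot simply cite Theorem 2.4 and stop: Theorem 2.4 guarantees that $KL(P)$-tilings respect the matching rule $G$, but one must still prove that every tiling of the plane by the three labeled pieces is forced into the intended global structure (the paper's Lemma 3.2 plus the case analysis (a)--(e)); without that uniqueness argument a spurious tiling of the three shapes could exist even when the Wang set does not tile.
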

We will prove Theorem 3.1 by reduction of the {\it domino problem}. 
In Ollinger’s proof of the undecidabililty of the $5$-Polyomino tiling problem,
Ollinger encoded an arbitrary set of Wang tiles using \emph{meat}, \emph{jaw}, \emph{filler}, \emph{wire} and \emph{tooth} \cite{NO}.
Using KL labeling, it is possible to reduce the number of polyominoes while maintaining a similar structure.
We will show that a set of $n$ Wang tiles with $m$ distinct colors can be encoded using \emph{tooth}, \emph{rod} and \emph{blade}.

\begin{itemize}
\setlength{\leftskip}{-2em}
\item[] \textbf{tooth}: a part of the $KL$ labeling process
\item[] \textbf{rod}: $KL$ labeled version of a $((6n-2)m+6) \times 2$ rectangle; replaces the $meat$, $filler$, and $wire$ in Ollinger’s construction
\item[] \textbf{blade}: $KL$ labeled version of two $((6n-2)m+6) \times ((4n-4)m+6)$ rectangles attached to a $((6n-2)m+6) \times 2$ rectangle;
replaces the \emph{jaw} in Ollinger’s construction
\end{itemize}
Figure $12$ shows the \emph{blade} and the \emph{rod} which encode the Wang tiles in Figure 1.

\begin{figure}[ht]
\centering
\includegraphics[width=0.75 \textwidth]{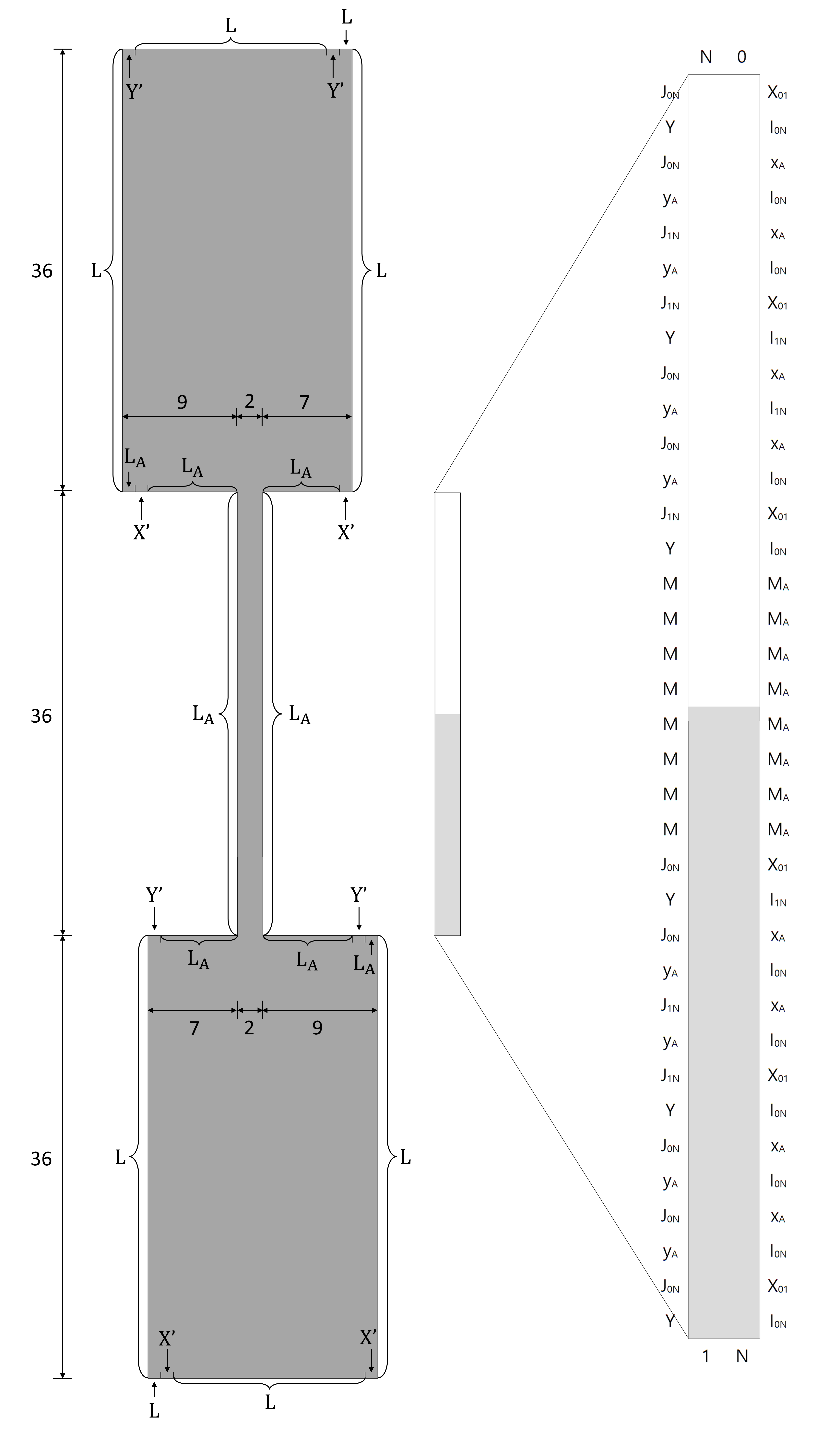}
\caption{The \emph{blade} and the \emph{rod}}
\label{fig:Figure12}
\end{figure}

\subsection{Notation of labels} %chapter 3.1

A total of $17$ different labels are used in constructing the \emph{blade} and the \emph{rod}.
The length of all \emph{keys} and \emph{locks} is $17$, being $100$ units long.
The minimum required scaling factor in this case(where the \emph{locks} do not overlap) is 207.
We construct the labels using this minimum scaling factor.

Each label is given a distinct index $1 \textendash 17$ for the \emph{key}; the index sets given for the locks were constructed using the method in Theorem 2.4.
Although it is possible to provide the matching rule $G$ that was applied,
we will instead use the index set for each label(shown in Table 3) since their representations are almost identical.

Table $1$ and Table $2$ give an overview of the labels and their roles.

\begin{table}[ht] %table1
\renewcommand{\arraystretch}{1.3}
\centering
\resizebox{\textwidth}{!}{%
\begin{tabular}{|c|l|}
\hline
label &  \multicolumn{1}{c|} {description} \\ \hline
\textbf{$0$} & \multirow{2}{*}{\begin{tabular}[c]{@{}l@{}}located at either end of a \emph{rod}; transfers 1 bit of information between \\
simulated Wang tiles and enforces their matching rule\end{tabular}} \\ \cline{1-1}
\textbf{$1$} &  \\ \hline
\textbf{$N$} & \begin{tabular}[c]{@{}l@{}}located at either end of a \emph{rod}; acts as a ‘neutral bit’ to erase information \\
when information is not necessary\end{tabular} \\ \hline
\end{tabular}%
}
\caption{Properties of labels $0$, $1$ and $N$}
\end{table}

\begin{table}[ht] %table2
\renewcommand{\arraystretch}{1.5}
\centering
\resizebox{\textwidth}{!}{%
\begin{tabular}{|c|l|l|}
\hline
label & \multicolumn{1}{c|} {$S_{label}$} & \multicolumn{1}{c|}{description} \\ \hline
\textbf{$M$} & $\{ k_e \mid e \in \{M, L\} \}$ &
  \begin{tabular}[c]{@{}l@{}}located in the middle of the long sides of a \emph{rod}; \\ acts as a spacer\end{tabular} \\ \hline

\textbf{$X$} & $\{ k_e \mid e \in \{I, J, L, X'\} \}$ &
\multirow{2}{*}{\begin{tabular}[c]{@{}l@{}}located at the long sides of a \emph{rod}; acts as a marker\\
to ensure that only 1 section of information can be selected\end{tabular}} \\ \cline{1-2}
\textbf{$Y$} & $\{ k_e \mid e \in \{I, L, Y'\} \}$ & \\ \hline

\textbf{$x$} & $\{ k_e \mid e \in \{I, J, L\} \}$ &
\multirow{2}{*}{\begin{tabular}[c]{@{}l@{}}located at the long sides of a \emph{rod}; \\
matches with the unused $N$ when information is being read\end{tabular}} \\ \cline{1-2}
\textbf{$y$} & $\{ k_e \mid e \in \{I, L\} \}$ & \\ \hline

\textbf{$I$} & $\{ k_e \mid e \in \{X, Y, x, y, L\} \}$ &
\multirow{2}{*}{\begin{tabular}[c]{@{}l@{}}located at the long sides of a \emph{rod}, separated by $X, Y, x$ or $y$; \\
contains information of Wang tiles to be simulated\end{tabular}} \\ \cline{1-2}
\textbf{$J$} & $\{ k_e \mid e \in \{X, x, L\} \}$ & \\ \hline

\textbf{$L$} & $\{ k_e \mid e \in \{M, X, Y, x, y, I, J\} \}$ &
  \begin{tabular}[c]{@{}l@{}}located around the \emph{blade}; \\can only match with the long sides of a \emph{rod}\end{tabular} \\ \hline

\textbf{$X’$} & $\{ k_e \mid e \in \{X\} \}$ &
\multirow{2}{*}{\begin{tabular}[c]{@{}l@{}}located at the ends of the \emph{blade}; forces the \emph{blade} to match\\
correctly when selecting a section of information\end{tabular}} \\ \cline{1-2}
\textbf{$Y’$} & $\{ k_e \mid e \in \{Y \} \}$ & \\ \hline
\end{tabular}%
}
\captionsetup{width=\textwidth}
\caption{Properties of labels $M$, $X$, $Y$, $x$, $y$, $I$, $J$, $L$, $X’$ and $Y’ $}
\end{table}

Note that none of the labels in Table 2 can match with labels $0$, $1$, or $N$ by default.
Instead, variants of labels in Table 2 are used as the following:\\
If $Z$ is a label, $Z_{0N}$, $Z_{1N}$, $Z_{01}$, and $Z_A$ are labels where
${S_{Z_{0N}} = S_Z \cup \{k_0, k_N\}}$,
${S_{Z_{1N}} = S_Z \cup \{k_1, k_N\}}$, ${S_{Z_{01}} = S_Z \cup \{k_0, k_1\}}$,
and ${S_{Z_A} = S_Z \cup \{k_0, k_1, k_N\}}$.

Table 3 shows a complete list of $17$ labels, including the variants used;
Table 4 shows their boundary word when the \emph{key} is oriented upwards(going from left to right, visiting the \emph{lock} first).

\begin{table}[ht] %table3
{ \footnotesize
\renewcommand{\arraystretch}{1.27}
\centering
\begin{tabular}{|c|l|}
\hline
label &  \multicolumn{1}{c|} {$S_{label}$} \\ \hline
\textbf{$0$}   & \{ $k_e \mid e \in \{M_A, X_{01}, x_A , y_A, I_{0N} , J_{0N}, L_A \} \}$ \\ \hline
\textbf{$1$}   & \{ $k_e \mid e \in \{M_A, X_{01}, x_A , y_A, I_{1N} , J_{1N}, L_A\} \}$ \\ \hline
\textbf{$N$}   & \{ $k_e \mid e \in \{M_A, x_A , y_A, I_{0N}, I_{1N}, J_{0N}, J_{1N}, L_A \} \}$ \\ \hline
\textbf{$M$}   & \{ $k_e \mid e \in \{M, M_A, L, L_A \} \}$ \\ \hline
\textbf{$M_A$}  & \{ $k_e \mid e \in \{M, M_A, L, L_A, 0, 1, N \} \}$ \\ \hline
\textbf{$X_{01}$} & \{ $k_e \mid e \in \{I_{0N}, I_{1N}, J_{0N}, J_{1N}, L, L_A, X', 0, 1 \}  \}$ \\ \hline
\textbf{$Y$}   & \{ $k_e \mid e \in \{I_{0N}, I_{1N}, L, L_A, Y' \} \}$ \\ \hline
\textbf{$x_A$}  & \{ $k_e \mid e \in \{I_{0N}, I_{1N}, J_{0N}, J_{1N},L, L_A, 0, 1, N \} \}$ \\ \hline
\textbf{$y_A$}  & \{ $k_e \mid e \in \{I_{0N}, I_{1N}, L, L_A, 0, 1, N \} \}$ \\ \hline
\textbf{$I_{0N}$} & \{ $k_e \mid e \in \{X_{01}, Y, x_A, y_A,L, L_A, 0, N \} \}$  \\ \hline
\textbf{$I_{1N}$} & \{ $k_e \mid e \in \{X_{01}, Y, x_A, y_A, L, L_A, 1, N \} \}$  \\ \hline
\textbf{$J_{0N}$} & \{ $k_e \mid e \in \{X_{01}, x_A , L, L_A, 0, N \} \}$ \\ \hline
\textbf{$J_{1N}$} & \{ $k_e \mid e \in \{X_{01}, x_A , L, L_A, 1, N \} \}$ \\ \hline
\textbf{$L$}   & \{ $k_e \mid e \in \{M, M_A, X_{01}, Y, x_A , y_A, I_{0N}, I_{1N}, J_{0N}, J_{1N} \} \}$ \\ \hline
\textbf{$L_A$}  & \{ $k_e \mid e \in \{M, M_A, X_{01}, Y, x_A , y_A, I_{0N}, I_{1N}, J_{0N}, J_{1N}, 0, 1, N \} \}$ \\ \hline
\textbf{$X’$}  & \{ $k_e \mid e \in \{X_{01} \} \}$ \\ \hline
\textbf{$Y’$}  & \{ $k_e \mid e \in \{Y \} \}$ \\ \hline
\end{tabular}%
}
\caption{List of all labels}
\end{table}

\begin{table}[ht] %table4
{\small
\renewcommand{\arraystretch}{1.25}
\centering

\begin{tabular}{|l|}
\hline
{$t=r^2 d^2 ru^2 r^2 ul^2 u^2 ld^2 l^2$, \ ${\bar{t}} =l^2 d^2 lu^2 l^2 ur^2 u^2 rd^2 r^2$} \\ \hline
{$r_0 =r^{100} d^{100} r(u^{12} tu^{17} tu^{11} tu^{5} tu^{5} tu^{11} tu^{5} tu^{27} )r^{5} (u^{3} {\bar{t}} u^{96} )rd^{100} r^{100}$} \\ \hline
{$r _{1} =r^{100} d ^{100} r(u ^{12} tu ^{11} tu ^{11} tu ^{11} tu ^{5} tu ^{11} tu ^{5} tu ^{27} )r ^{5} (u ^{9} {\bar{t}} u ^{90} )rd ^{100} r ^{100}$} \\ \hline
{$r _{N} =r^{100} d ^{100} r(u ^{12} tu ^{11} tu ^{5} tu ^{5} tu ^{5} tu ^{5} tu ^{5} tu ^{17} tu ^{27} )r ^{5} (u ^{15} {\bar{t}} u ^{84} )rd ^{100} r ^{100}$} \\ \hline
{$r _{M} =r^{100} d ^{100} r(u ^{12} tu ^{5} tu ^{53} tu ^{5} tu ^{21} )r ^{5} (u ^{21} {\bar{t}} u ^{78} )rd ^{100} r ^{100}$} \\ \hline
{$r _{M _{A}} =r^{100} d ^{100} r(u ^{12} tu ^{5} tu ^{53} tu ^{5} tu ^{5} tu ^{5} tu ^{5} tu ^{3} )r ^{5} (u ^{27} {\bar{t}} u ^{72} )rd ^{100} r ^{100}$} \\ \hline
{$r _{X _{01}} =r^{100} d^{100} r(u^{6} tu^{5} tu^{5} tu^{5} tu^{5} tu^{5} tu^{5} tu^{47} tu^{5} tu^{3} )r^{5} (u^{33} {\bar{t}} u^{66} )rd^{100} r^{100}$} \\ \hline
{$r _{Y} =r^{100} d ^{100} r(tu ^{11} tu ^{5} tu ^{17} tu ^{5} tu ^{51} )r ^{5} (u ^{39} {\bar{t}} u ^{60} )rd ^{100} r ^{100}$} \\ \hline
{$r _{x _{A}} =r^{100} d^{100} r(u^{12} tu^{5} tu^{5} tu^{5} tu^{5} tu^{5} tu^{41} tu^{5} tu^{5} tu^{3} )r^{5} (u^{45} {\bar{t}} u^{54} )rd^{100} r^{100}$} \\ \hline
{$r _{y _{A}} =r^{100} d ^{100} r(u ^{12} tu ^{5} tu ^{17} tu ^{5} tu ^{41} tu ^{5} tu ^{5} tu ^{3} )r ^{5} (u ^{51} {\bar{t}} u ^{48} )rd ^{100} r ^{100}$} \\ \hline
{$r _{I _{0N}} =r^{100} d^{100} r(u^{12} tu^{5} tu^{29} tu^{5} tu^{5} tu^{5} tu^{17} tu^{11} tu^{3} )r^{5} (u^{57} {\bar{t}} u^{42} )rd^{100} r^{100}$} \\ \hline
{$r _{I _{1N}} =r^{100} d^{100} r(u^{12} tu^{5} tu^{29} tu^{5} tu^{5} tu^{5} tu^{17} tu^{5} tu^{9} )r^{5} (u^{63} {\bar{t}} u^{36} )rd^{100} r^{100}$} \\ \hline
{$r _{J _{0N}} =r^{100} d ^{100} r(u ^{12} tu ^{5} tu ^{35} tu ^{11} tu ^{17} tu ^{11} tu ^{3} )r ^{5} (u ^{69} {\bar{t}} u ^{30} )rd ^{100} r ^{100}$} \\ \hline
{$r _{J _{1N}} =r^{100} d ^{100} r(u ^{12} tu ^{5} tu ^{35} tu ^{11} tu ^{17} tu ^{5} tu ^{9} )r ^{5} (u ^{75} {\bar{t}} u ^{24} )rd ^{100} r ^{100}$} \\ \hline
{$r _{L} =r^{100} d^{100} r(u^{24} tu^{5} tu^{5} tu^{5} tu^{5} tu^{5} tu^{5} tu^{5} tu^{5} tu^{5} tu^{21} )r^{5} (u^{81} {\bar{t}} u^{18} )rd^{100} r^{100}$} \\ \hline
{$r _{L _{A}} =r ^{100} d ^{100} r(u ^{24} tu ^{5} tu ^{5} tu ^{5} tu ^{5} tu ^{5} tu ^{5} tu ^{5} tu ^{5} tu ^{5} tu ^{5} tu ^{5} tu ^{5} tu ^{3} )r ^{5} (u ^{87} {\bar{t}} u ^{12} )rd ^{100} r ^{100}$} \\ \hline
{$r _{X'} =r ^{100} d ^{100} r(u ^{66} tu ^{33} )r ^{5} (u ^{93} {\bar{t}} u ^{6} )rd ^{100} r ^{100}$} \\ \hline
{$r _{Y'} =r ^{100} d ^{100} r(u ^{60} tu ^{39} )r ^{5} (u ^{99} {\bar{t}} )rd ^{100} r ^{100}$} \\ \hline
\end{tabular}%
}
\caption{Boundary words of labels}
\end{table}

Using the boundary words in Table $4$ and their rotations,
the boundary words for the polyominoes encoding $n$ Wang tiles with $m$ distinct colors are given in Table 5. 

\begin{table}[ht] %table5
{
\renewcommand{\arraystretch}{1.7}
\resizebox{\textwidth}{!}{%
\centering
\begin{tabular}{|c|l|}
\hline
\textbf{$tooth$} & $ r ^{2} d ^{2} ru ^{2} r ^{2} ul ^{2} u ^{2} ld ^{2} l ^{2} d $ \\ \hline
\textbf{$rod$} & \begin{tabular}[c]{@{}l@{}} $ \left( r_{X_{01}} \left( r_{I} r_{x_{A}} \right)^{m-1} r_{I_{0N}} \right)^{n} r_{X_{01}} r_{I_{0N}} r_{M_{A}}^{(2n-2)m+2} \left( r_{X_{01}} \left( r_{I} r_{x_{A}} \right)^{m-1} r_{I_{0N}} \right)^{n} r_{X_{01}} r_{I_{0N}} d_{N} d _{1}$ \\
$\left( l_{Y} \left( l_{J} l_{y_{A}} \right) ^{m-1} l_{J_{0N}} \right) ^{n} l_{Y} l_{J_{0N}} l_{M}^{(2n-2)m+2} \left( l_{Y} \left( l_{J} l_{y_{A}} \right) ^{m-1} l_{J_{0N}} \right) ^{n} l_Y l_{J_{0N}} u_{N} u_{0} $ \end{tabular} \\ \hline
\textbf{$blade$} & \begin{tabular}[c]{@{}l@{}} $r_{L}^{(6n-2)m+6} d_{X'} d_{L_{A}}^{(2n-2)m} r_{L_{A}}^{(6n-2)m+6} u_{L_{A}}^{(2n-2)m+1} u_{Y'} u_{L_{A}} r_{L}^{(6n-2)m+6} d_{X'} d_{L}^{(4n-4)m+3} d _{X'} d_{L}$ \\
$l_{L}^{(6n-2)m+6} u_{Y'} u_{L_{A}}^{(2n-2)m} l_{L_{A}}^{(6n-2)m+6} d_{L_{A}}^{(2n-2)m+1} d_{X'} d_{L_{A}} l_{L}^{(6n-2)m+6} u_{Y'} u_{L}^{(4n-4)m+3} u_{Y'} u_{L} $ \end{tabular} \\ \hline
\end{tabular}%
}}
\captionsetup{width=\textwidth}
\caption{Boundary words of \emph{tooth}, \emph{blade}, and \emph{rod}($r_I$’s can be either $r_{I_{0N}}$ or $r_{I_{1N}}$
and $l_J$’s can be either $l_{J_0N}$ or $l_{J_N}$ depending on the coloring of the Wang tiles)}
\end{table}

\subsection{The tiling structure} %chapter 3.2

We begin with an introduction to the general structure of the tiling. When \emph{blades} are arranged in a pattern as in Figure $13$,
large rectangular spaces are created. Each of these rectangles will encode a single Wang tile.

\begin{figure}[ht]
\centering
\includegraphics[width=0.65 \textwidth]{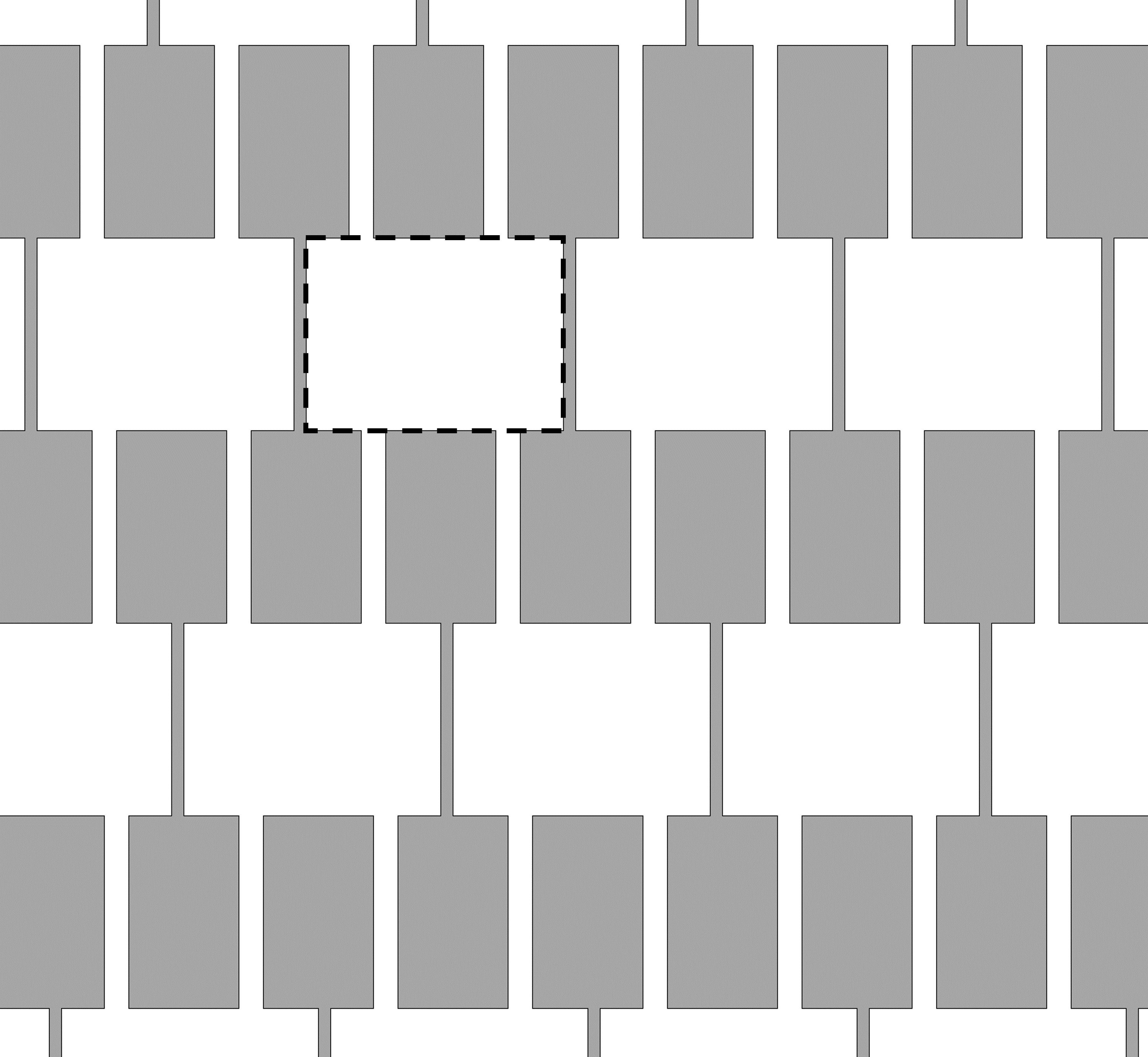}
\caption{A pattern of \emph{blades} forming rectangular spaces}
\label{fig:Figure13}
\end{figure}

In order to distinguish \emph{rods} in different positions, we will denote each \emph{rod} with its corresponding polyomino from Ollinger’s construction.
We classify rods into 3 types: \emph{meat rod}, \emph{filler rod}, and \emph{wire rod}.

The rectangular spaces are filled with \emph{meat rods} and \emph{filler rods}.
The arrangement of these \emph{rods} determines which Wang tile is selected.
Figure $14$ shows 2 possible arrangements of the \emph{rods}.

\begin{figure}[ht]
\centering
\includegraphics[width=0.9 \textwidth]{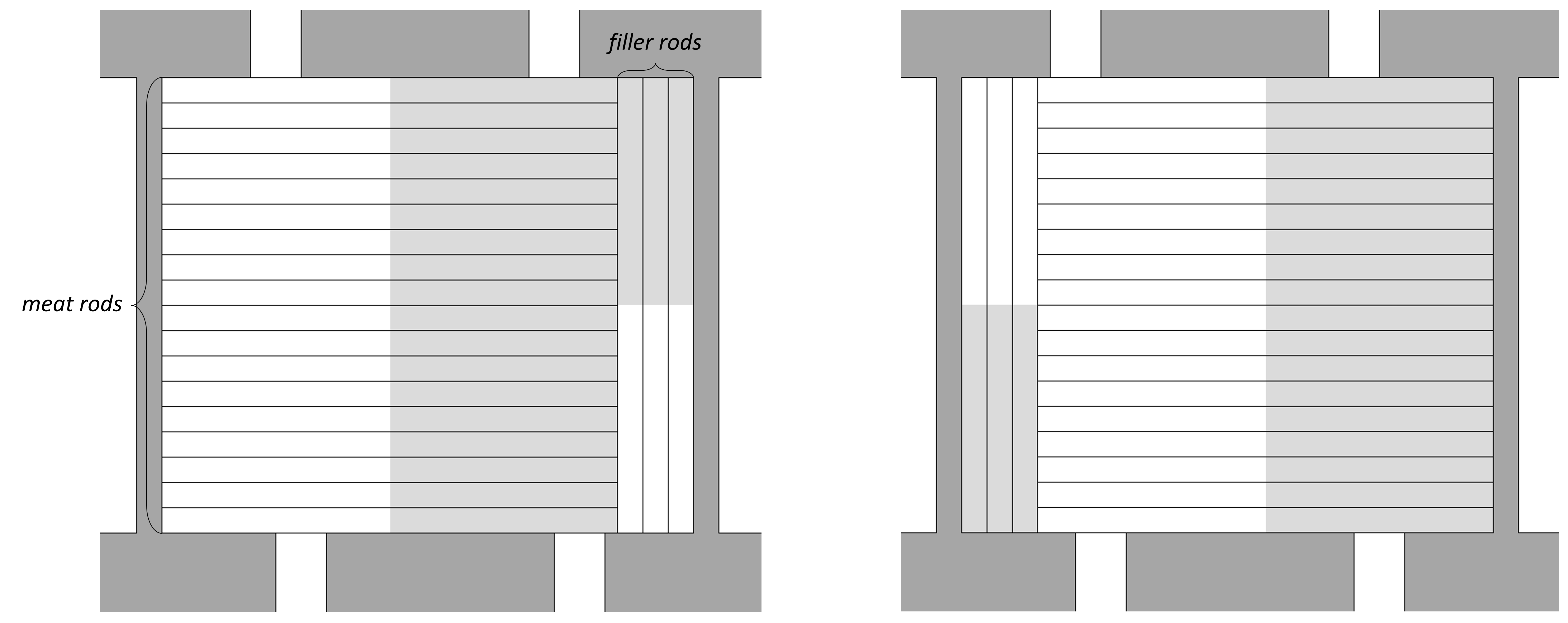}
\captionsetup{width=\textwidth}
\caption{$2$ different arrangements of \emph{rods} filling up the rectangular space}
\label{fig:Figure14}
\end{figure}

Note that in the example used, $n = 2$ so there are only $2$ valid arrangements of the \emph{rods}.
In a case where $n>2$, the \emph{meat rods} can be located between \emph{filler rods} as well.
Figure $15$ shows an example where $n = 4$ and $m = 3$.

\begin{figure}[ht]
\centering
\includegraphics[width=0.8 \textwidth]{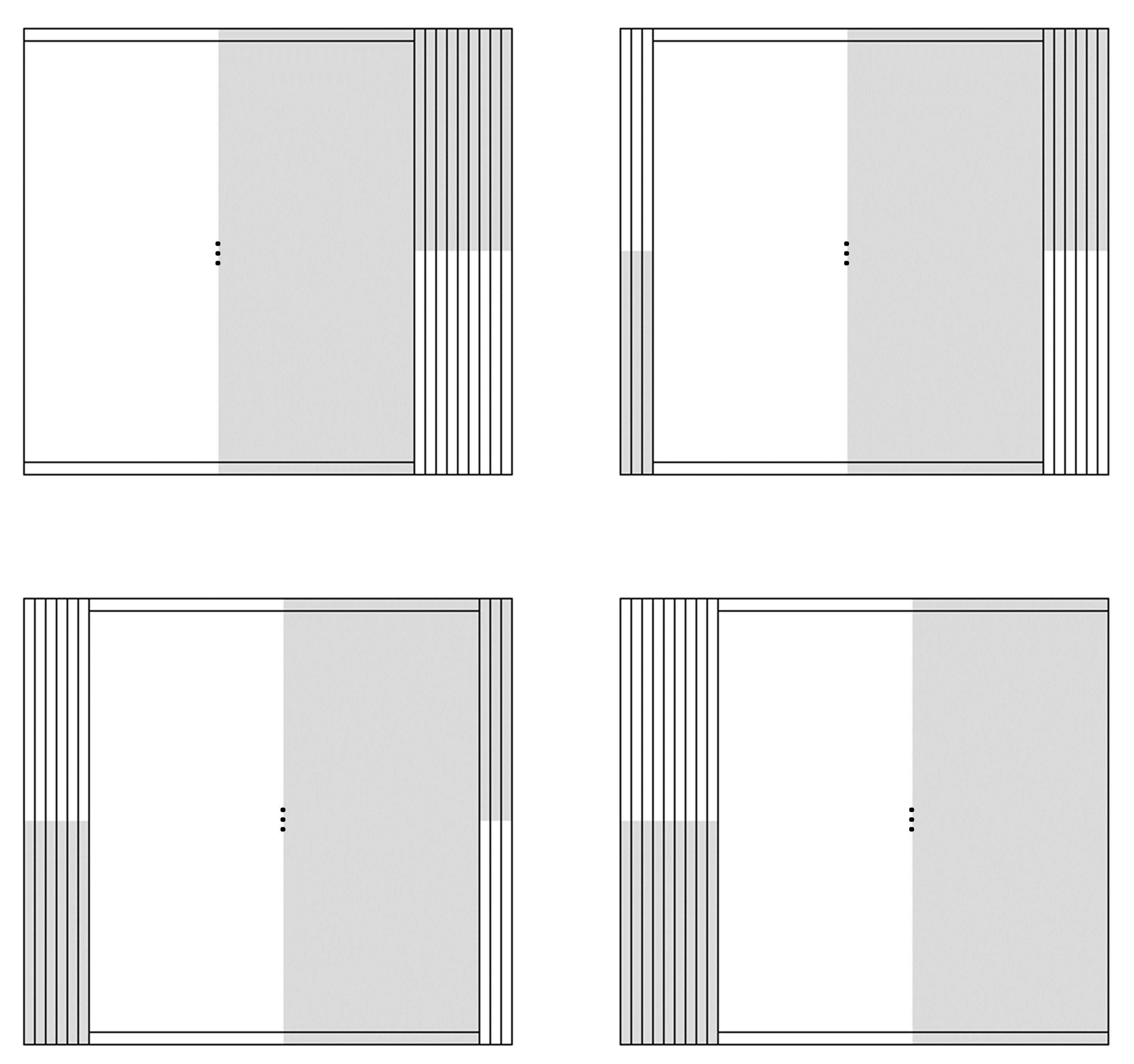}
\caption{4 different arrangements of \emph{rods}}
\label{fig:Figure15}
\end{figure}

In both of the cases in Figure $14$, $4$ different sections of the \emph{meat rods} are exposed.
These correspond to the $4$ sides of the selected Wang tile.
$m-1$ \emph{wire rods} are placed in each of these spaces to transfer information to the adjacent Wang tile(Figure $16$).
This ensures that two Wang tiles can be adjacent only if their colors match along the edge.

\begin{figure}[ht]
\centering
\includegraphics[width=0.22 \textwidth]{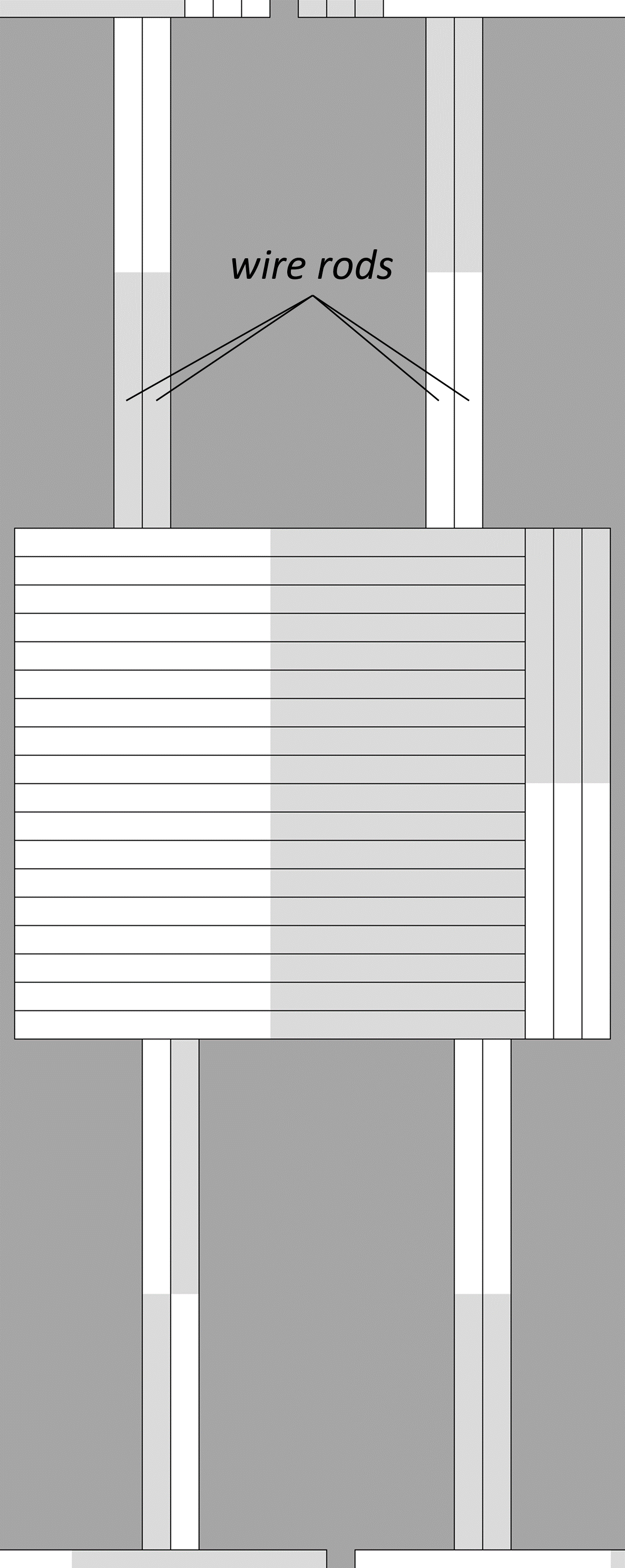}
\caption{\emph{Wire rods} connecting adjacent Wang tiles}
\label{fig:Figure16}
\end{figure}
Figure $17$ shows the complete tiling pattern.

\begin{figure}[ht]
\centering
\includegraphics[width=0.4 \textwidth]{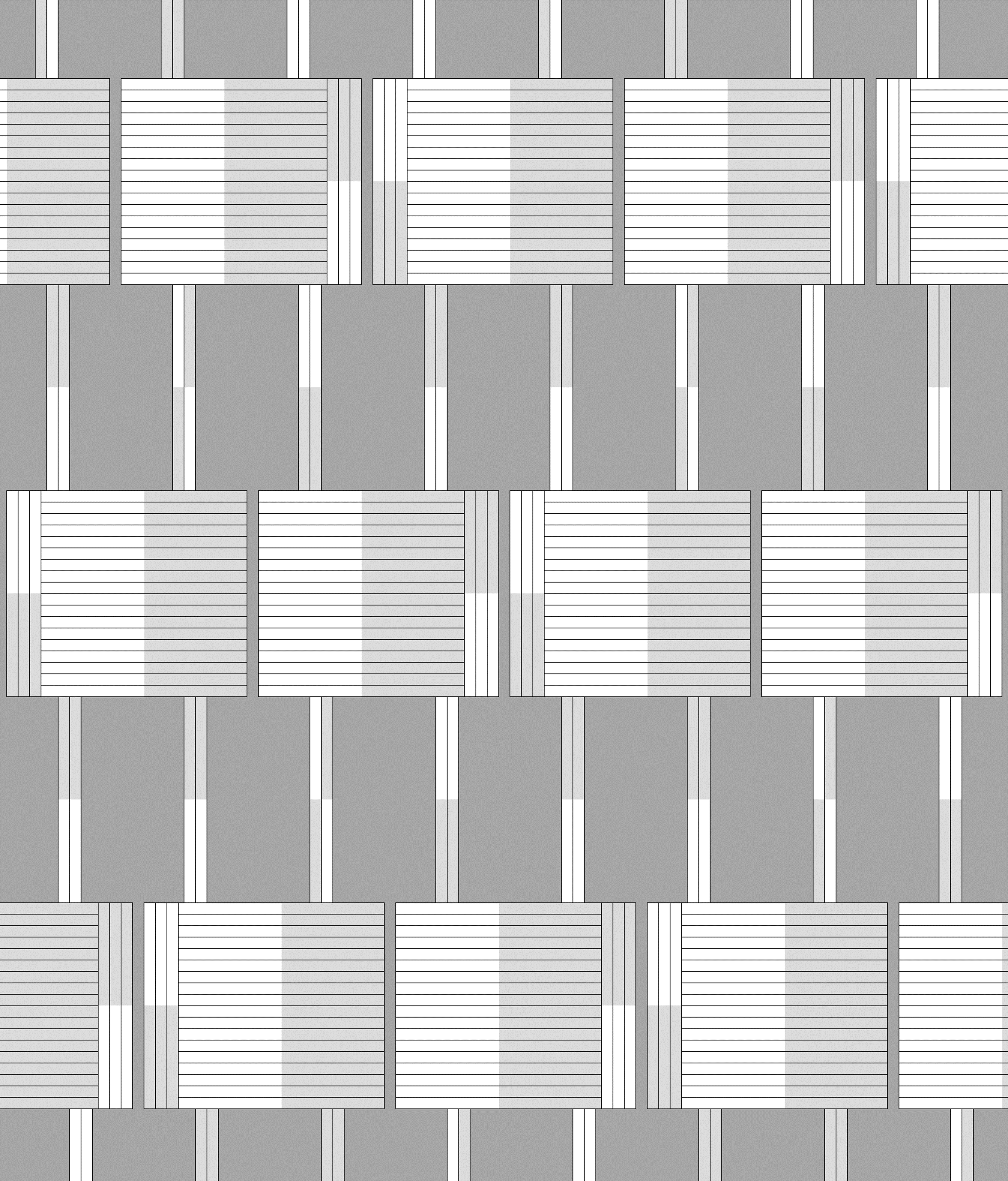}
\captionsetup{width=0.75 \textwidth}
\caption{Tiling of the plane with \emph{teeth}, \emph{rods} and \emph{blades}; \emph{teeth} cannot be seen at this scale.}
\label{fig:Figure17}
\end{figure}

\subsection{Storing and reading information} %chapter 3.3

The information of the Wang tiles is stored in labels $I_{0N}$, $I_{1N}$, $J_{0N}$ and $J_{1N}$. 
However, not all of these labels contain information:
in Figure $18$, only the labels marked in black can alter the orientations of \emph{wire rods}, affecting the tiling.
We will refer to these labels as ‘information labels’.

Information labels are grouped into sections, where different sections are separated by labels $X_{01}$ and $Y$.
Each section contains $m-1$ information labels, which encode a single color.
The $i$th color$(1\leq i \leq m)$ from the Wang tiles is encoded as:

$\qquad$ $I_{1N}$ $\cdots$ $I_{1N}$ ($i-1$ copies) $\quad$ $I_{0N}$ $\cdots$ $I_{0N}$ ($m-i$ copies) or

$\quad \; \; \;$ $J_{0N}$ $\cdots$ $J_{0N}$ ($i-1$ copies) $\; \; \,$ $J_{1N}$ $\cdots$ $J_{1N}$ ($m-i$ copies)

$\quad \; \; \;$ (starting from the information label closest to label $0$)

Note that unlike in previous studies where every bit could be set independently, we only allow consequent $0$’s followed by $1$’s, or vice versa.
This is to ensure that the \emph{wire rods} do not form an invalid match.

Sections adjacent to each other represent edges of the same position from different Wang tiles.
Figure $18$ shows how each edge of the Wang tiles in Figure $1$ is encoded.

\begin{figure}[ht]
\centering
\includegraphics[width=0.9 \textwidth]{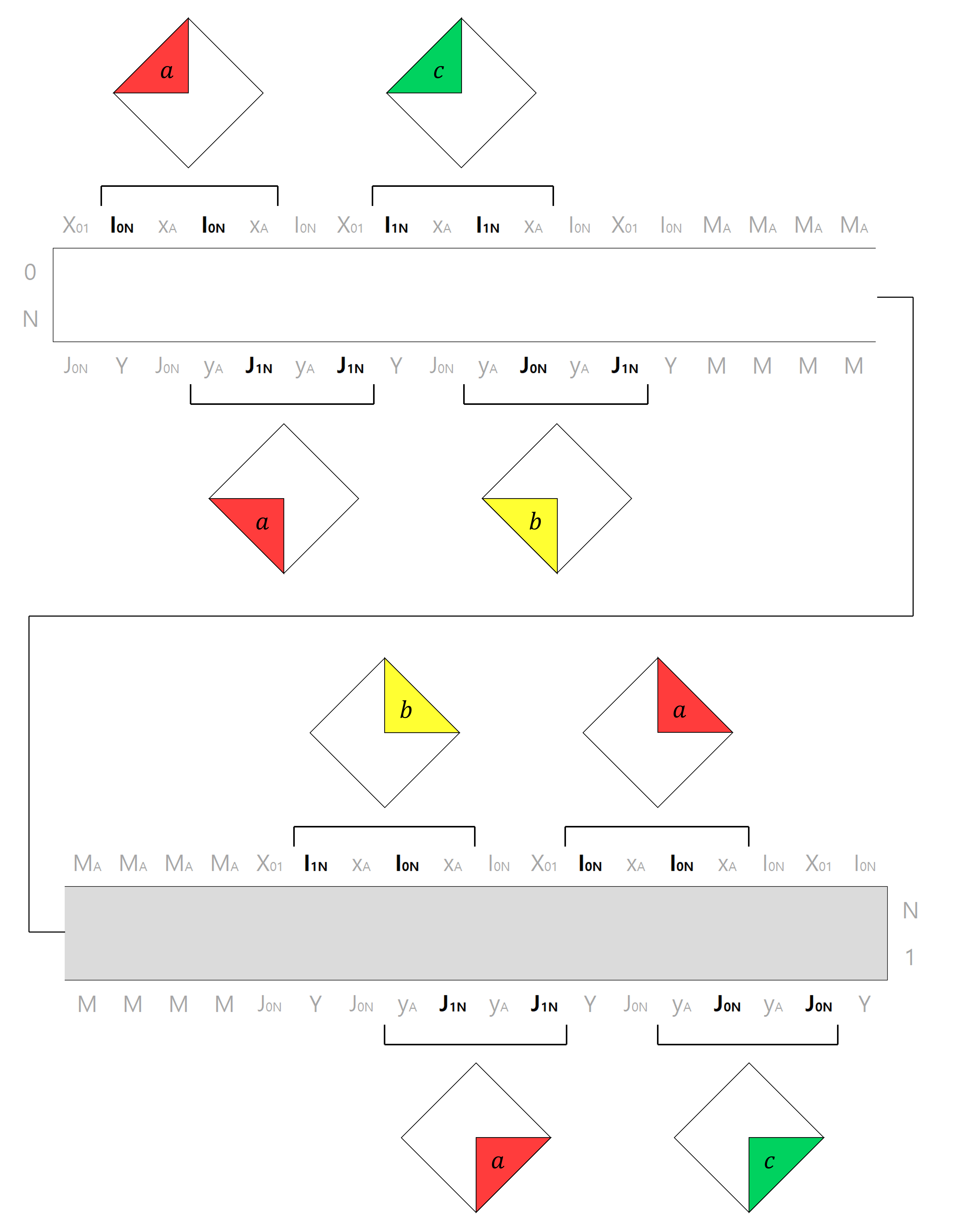}
\caption{An encoding of a set of 2 Wang tiles}
\label{fig:Figure18}
\end{figure}

Next, we explain the conditions for the information to be read.
The information labels encode the colors as their possibility to match with labels $0$ or $1$.
Thus, if an information label matches with a label other than $0$ or $1$, its information is discarded.
This occurs when two \emph{rods} of any type match along the long side or when a \emph{meat rod} and a \emph{filler rod} match.
The latter is shown in Figure $19$.
We can observe that every information label of the \emph{filler rods} matches with label $N$  of the \emph{meat rods}.

\begin{figure}[ht]
\centering
\includegraphics[width=0.9 \textwidth]{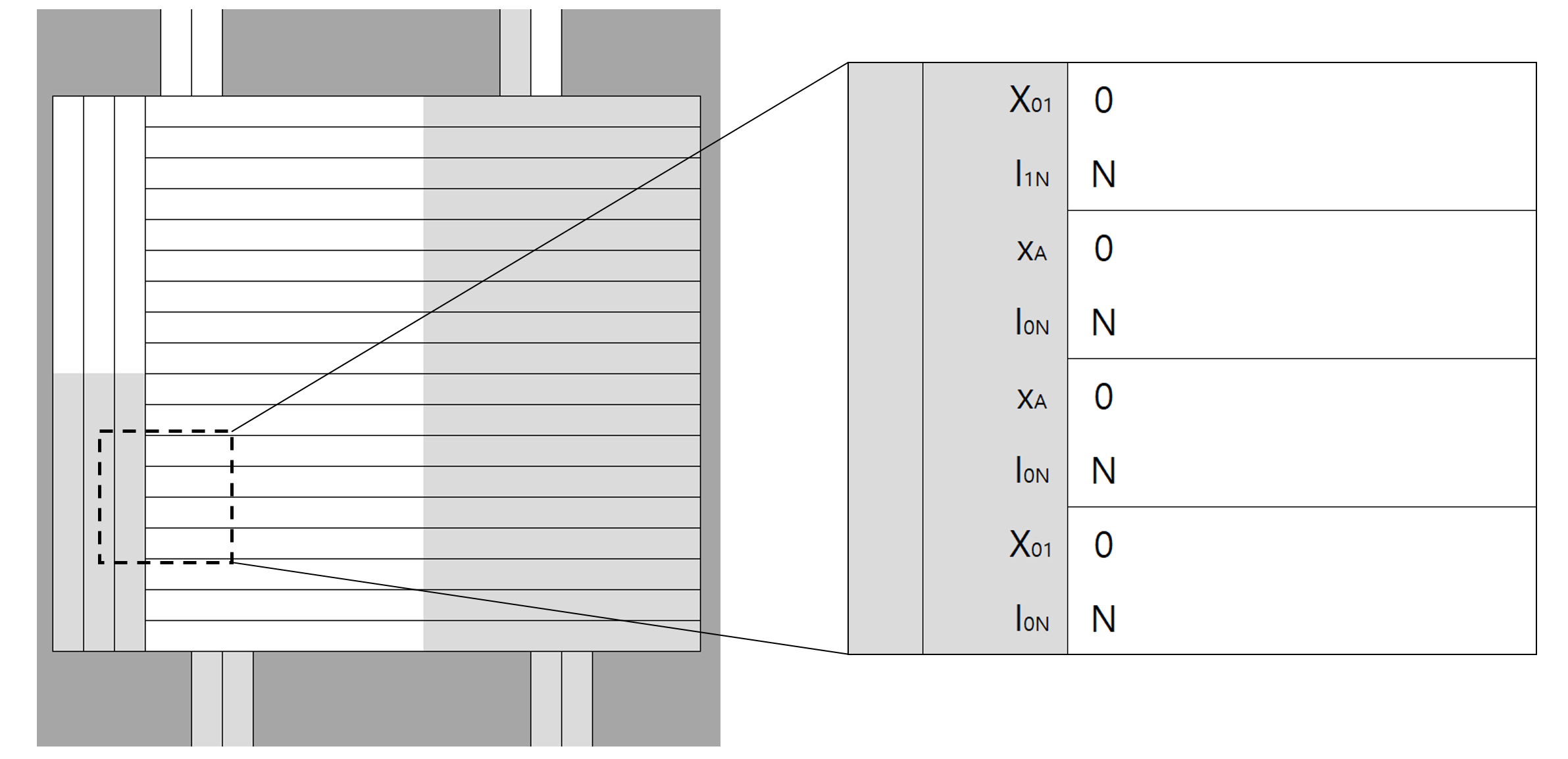}
\caption{An area of the tiling where no information is read}
\label{fig:Figure19}
\end{figure}

When \emph{wire rods} are placed next to a \emph{meat rod}, their labels $0$ and $1$ match with the information labels of the \emph{meat rods}(Figure 20).
Thus, the \emph{wire rods} can only be placed in one of the two orientations determined by the stored information.
For example, if the information label exposed to the \emph{wire rod} is $I_{0N}$ or $J_{0N}$,
the \emph{wire rod} must be oriented so that its label $0$ faces the \emph{meat rod}.

\begin{figure}[ht]
\centering
\includegraphics[width=0.9 \textwidth]{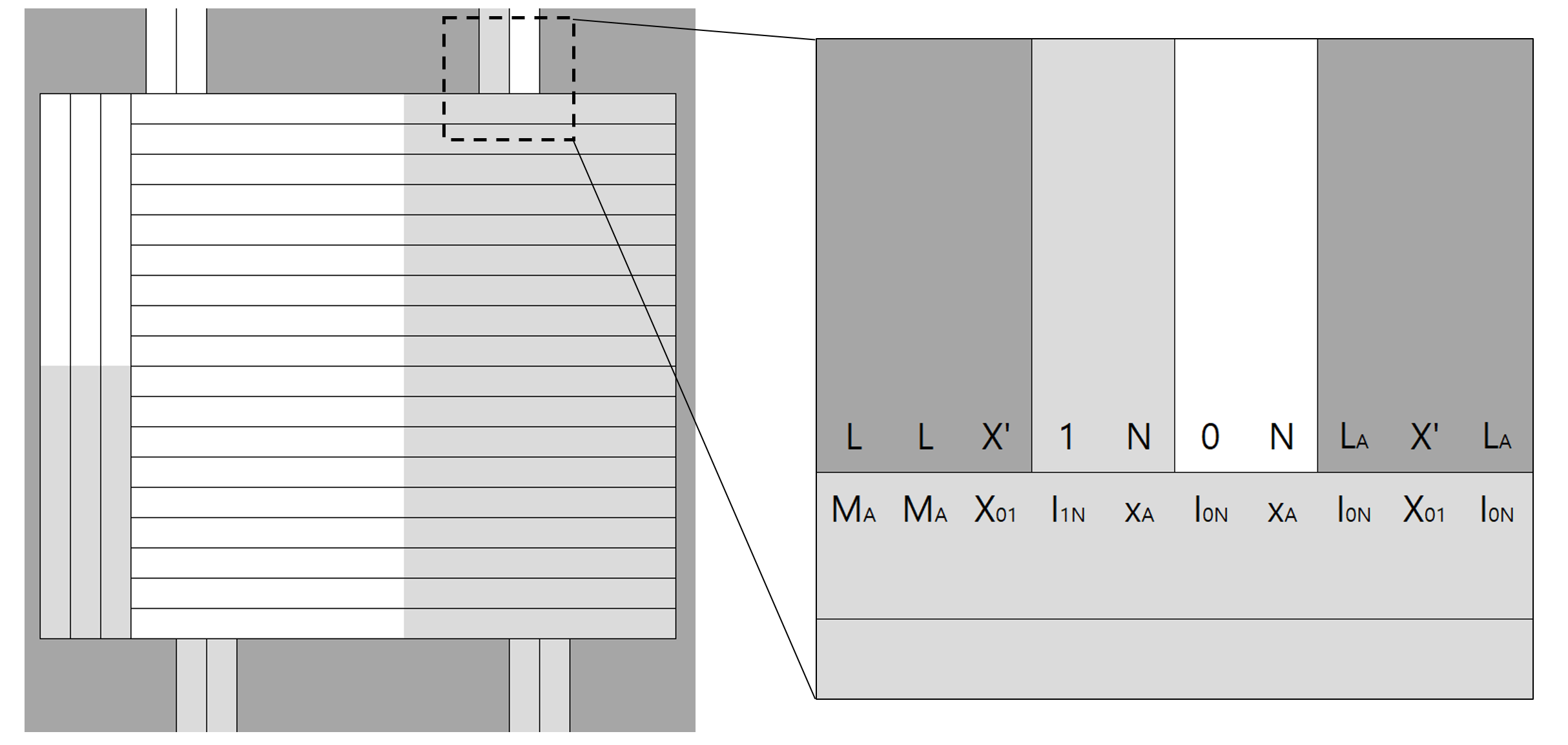}
\captionsetup{width=\textwidth}
\caption{\emph{Wire rods} reading information of the \emph{meat rod}}
\label{fig:Figure20}
\end{figure}

Finally, since the $2$ short sides of a \emph{wire rod} each contains labels $0$ and $1$, the $0$'s and the $1$’s in the information will be swapped.
For example, if one side of the $m-1$ \emph{wire rods} matched with $I_{1N} I_{0N} I_{0N}$ , the other side must match with $J_{0N} J_{1N} J_{1N}$.
Following the color encoding rule, this condition translates into $2$ edges of the same color being able to match.
This is shown in Figure $21$.

\begin{figure}[ht]
\centering
\includegraphics[width=0.9 \textwidth]{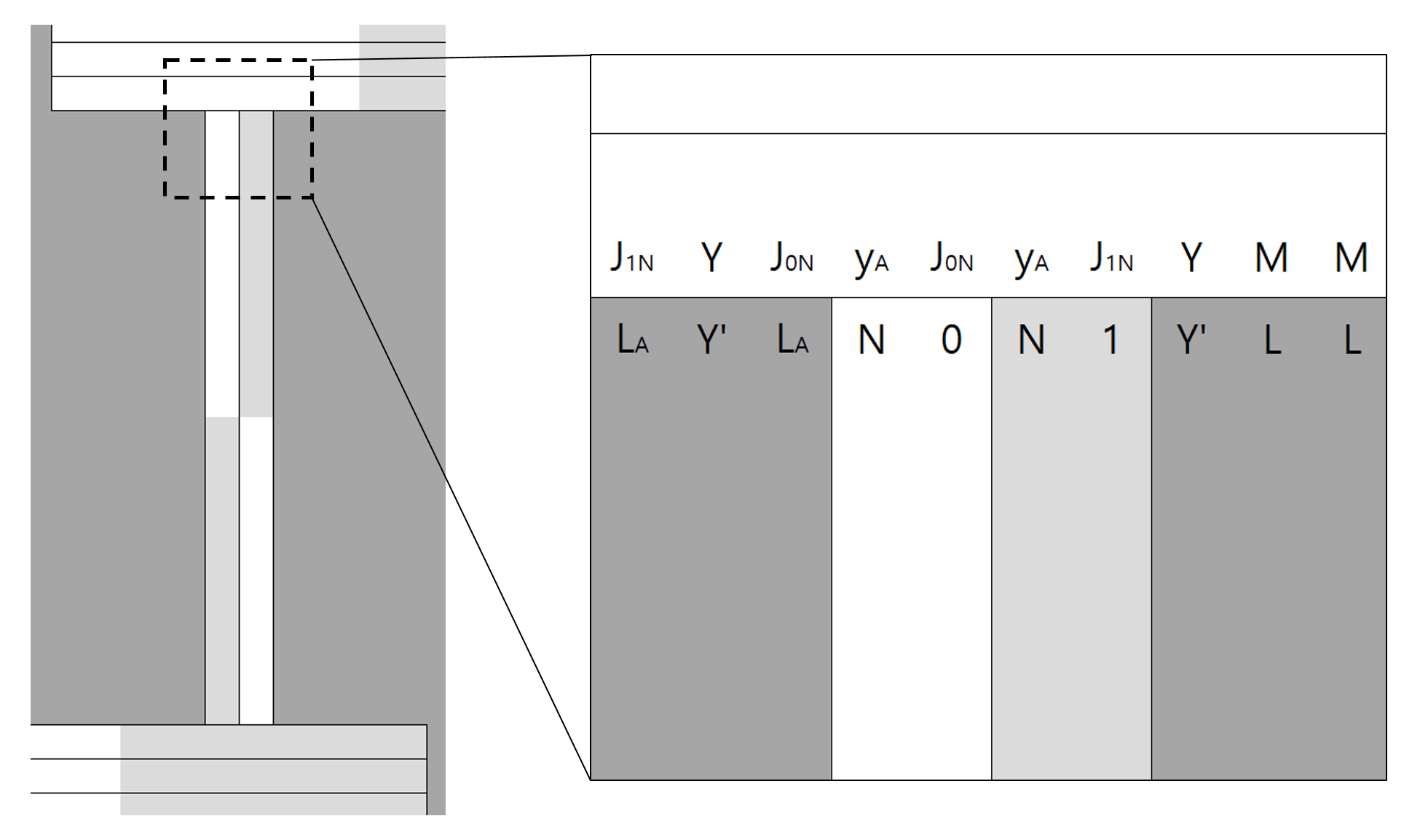}
\captionsetup{width=0.95 \textwidth}
\caption{The other end of the \emph{wire rods} shown in figure 20; the sections matching with either ends of these \emph{wire rods} encode the same color.}
\label{fig:Figure21}
\end{figure}

\subsection{Uniqueness of the tiling structure} %chapter 3.4
We will finish the proof for Theorem 3.1 by showing that if \emph{teeth, rods} and \emph{blades} permit a tiling, they must form the structure in Figure 17.

\begin{lemma} %lemma3.2
It is impossible to tile the plane with only teeth and rods.
\end{lemma}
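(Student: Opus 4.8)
The plan is to show that a tiling using only \emph{teeth} and \emph{rods} is impossible by analyzing how the labeled edges of a \emph{rod} must be filled. The key structural fact, established in Lemma 2.3 (applied to the set $P$ consisting of just the $((6n-2)m+6)\times 2$ rectangle underlying a \emph{rod}), is that any tiling by \emph{teeth} and \emph{rods} forces the \emph{rods} onto a common lattice with all edges aligned, so that each labeled edge of a \emph{rod} is matched against exactly one labeled edge of another \emph{rod} (the \emph{teeth} only fill the small gaps left by imperfectly-matched \emph{keys} and \emph{locks}). So the problem reduces to a combinatorial one: can the four edges of every \emph{rod} in the tiling be consistently paired up with edges of other \emph{rods} so that every pair matches according to the rule of Theorem 2.4, i.e.\ $k_{e_i}\in S_{e_j}$ and $k_{e_j}\in S_{e_i}$?

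\textbf{Main steps.} First I would record, from Table 5, exactly which labels appear on a \emph{rod}: the two long sides carry (in order) blocks built from $X_{01}$, $I_{0N}$/$I_{1N}$, $x_A$ (resp.\ $Y$, $J_{0N}$/$J_{1N}$, $y_A$) together with a central run of $M_A$'s (resp.\ $M$'s), and the two short sides carry $d_N d_1$ (resp.\ $u_N u_0$) — that is, the short ends carry the bit-labels $0$, $1$, $N$ while the long sides carry the "machinery" labels. Second, consulting Table 3, I would observe the crucial incompatibility: the long-side labels of a \emph{rod} ($X_{01}, I_{0N}, I_{1N}, x_A, M_A$ on one side; $Y, J_{0N}, J_{1N}, y_A, M$ on the other) can match among themselves and with the \emph{blade} labels $L, L_A$, but a short-side label of a \emph{rod} is $0$, $1$, or $N$, and $S_0, S_1, S_N$ contain only $k_e$ for $e$ among the "$\cdot_A$" and "$\cdot_{0N}/\cdot_{1N}$" variants — in particular a short side of one \emph{rod} can match a long side of another, but never another short side, and more importantly the short-side bit labels need an "$A$''-variant or "$0N/1N$''-variant partner. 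Third — and this is the heart — I would argue by a counting/parity obstruction: follow a single horizontal strip of the lattice; a \emph{rod} placed in it exposes short ends that must be met by long sides of vertically-adjacent \emph{rods}, whose own short ends must in turn be met, and so on; since every \emph{rod} has exactly two short ends and two long sides, and the short ends can only be "absorbed" against long sides (never against each other or against \emph{teeth}, which cannot fill a whole edge), one derives that the \emph{rods} must propagate without bound in a way that cannot close up — equivalently, some label is forced that has no available matching partner among \emph{rod}-labels, because the \emph{blade} labels $L$, $L_A$, $X'$, $Y'$ are absent. Concretely, the label $X'$ (needed, via Table 3, as the only partner for $X_{01}$'s "marker" role? — no: $X_{01}$ matches many things) — rather, the clean contradiction is that $M_A$ on a \emph{rod}'s long side wants a partner in $\{M, M_A, L, L_A, 0, 1, N\}$, so a long run of $M_A$ must lie against a long run of $M$ or $M_A$ (another \emph{rod}) or $L/L_A$ (a \emph{blade}); iterating this pairing of long sides, together with the bit-labels on the short sides having nowhere valid to go except against long sides that are already committed, forces an infinite descent.

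\textbf{The hard part} will be turning the intuitive "the \emph{rods} have no way to terminate without a \emph{blade}" picture into a rigorous finite argument. I expect to formalize it as follows: pass to the induced lattice from Lemma 2.3, 2-color or otherwise invariant-count the \emph{rod} edges, and show that each \emph{rod} contributes a fixed deficit (two short ends needing long-side partners, but each \emph{rod} supplying only long sides that are themselves partly consumed by $M$/$M_A$ matching), so in any finite region the number of "short-end demands" exceeds the number of admissible "long-side supplies" coming from \emph{rods} alone — the slack being exactly what a \emph{blade}'s $L$/$L_A$-coated boundary would provide. A subtlety to handle carefully is that a short end of a \emph{rod} ($d_N d_1$ or $u_N u_0$) is only two labels long while a long side is $(6n-2)m+6$ labels long, so "matching" a short end against a long side is geometrically impossible unless one accounts for the scaled rectangle shape; I would note that in the tiling of $KL(P)$ the relevant adjacencies are between full edges of the scaled rectangle (length $(6n-2)m+6$ on the long sides, $2$ on the short sides, times the scaling factor $207$), so a short end can only ever be adjacent to another short end — which immediately gives the contradiction, since no short-end label ($0,1,N$) lies in $S_0\cup S_1\cup S_N$. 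That last observation, once verified against Table 3, is likely the whole proof: every \emph{rod} has two short ends, short ends can only abut short ends, and two short-end labels never match, so the \emph{teeth} would have to fill an entire edge of a \emph{rod}, contradicting Lemma 2.1.
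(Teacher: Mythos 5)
Your proposed final contradiction rests on a false geometric claim. You assert that because the \emph{rods} lie on a common lattice, ``a short end can only ever be adjacent to another short end.'' This is not true: a \emph{rod} is a long thin rectangle ($((6n-2)m+6)\times 2$ before scaling), and in a lattice-aligned packing of such rectangles the $2$-label short end of one \emph{rod} can perfectly well abut two consecutive labels on the long side of a perpendicular \emph{rod} (think of a brick or herringbone arrangement). Indeed the whole construction depends on exactly this: the index sets in Table 3 show that $0$, $1$ and $N$ match $M_A$, $x_A$, $y_A$, $I_{0N}$, $I_{1N}$, $J_{0N}$, $J_{1N}$ (and $X_{01}$ for $0$,$1$), all of which are long-side labels, and Figure 20 shows \emph{wire rods} reading information precisely by placing their short ends against the long side of a \emph{meat rod}. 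So the ``immediate'' contradiction evaporates, and the vaguer counting/infinite-descent sketch you fall back on earlier is never made precise enough to stand on its own.

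The paper's actual argument is more delicate and you would need its two missing ingredients. First, the labels $M$/$M_A$ in the middle of the long sides (which, absent a \emph{blade}, can only match $M$ or $M_A$) force a specific chain: of the two ways a second \emph{rod} can cover the exposed $M$'s, one is killed because it would force $Y$ against $J_{0N}$/$J_{1N}$, and the surviving one re-exposes another $M$, so the \emph{rods} are forced into an infinite strip stacked long-side to long-side. Second, the short sides of that strip must be covered by the long side of a perpendicular \emph{rod} (since $0$, $1$, $N$ cannot match one another), and \emph{any} such perpendicular placement creates a dent bounded entirely by short sides, which then genuinely cannot be filled --- that is where the ``no two of $0,1,N$ match'' fact is correctly deployed, as a local unfillable-dent argument rather than a global ``short ends only meet short ends'' claim. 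Without establishing the forced strip and the resulting dent, your proof does not go through.
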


\begin{proof}
Assume that it is possible to tile the plane with only \emph{teeth} and \emph{rods}.
By Lemma 2.1, a \emph{rod} must be used.
Since we are not using any \emph{blades}, the label $M$ on the \emph{rod} can only match with labels $M$ or $M_A$.
In order for all labels $M$ to match with labels $M$ or $M_A$, there are two possible configurations for the second \emph{rod} as shown in Figure $22$.

\begin{figure}[ht]
\centering
\includegraphics[width=0.7 \textwidth]{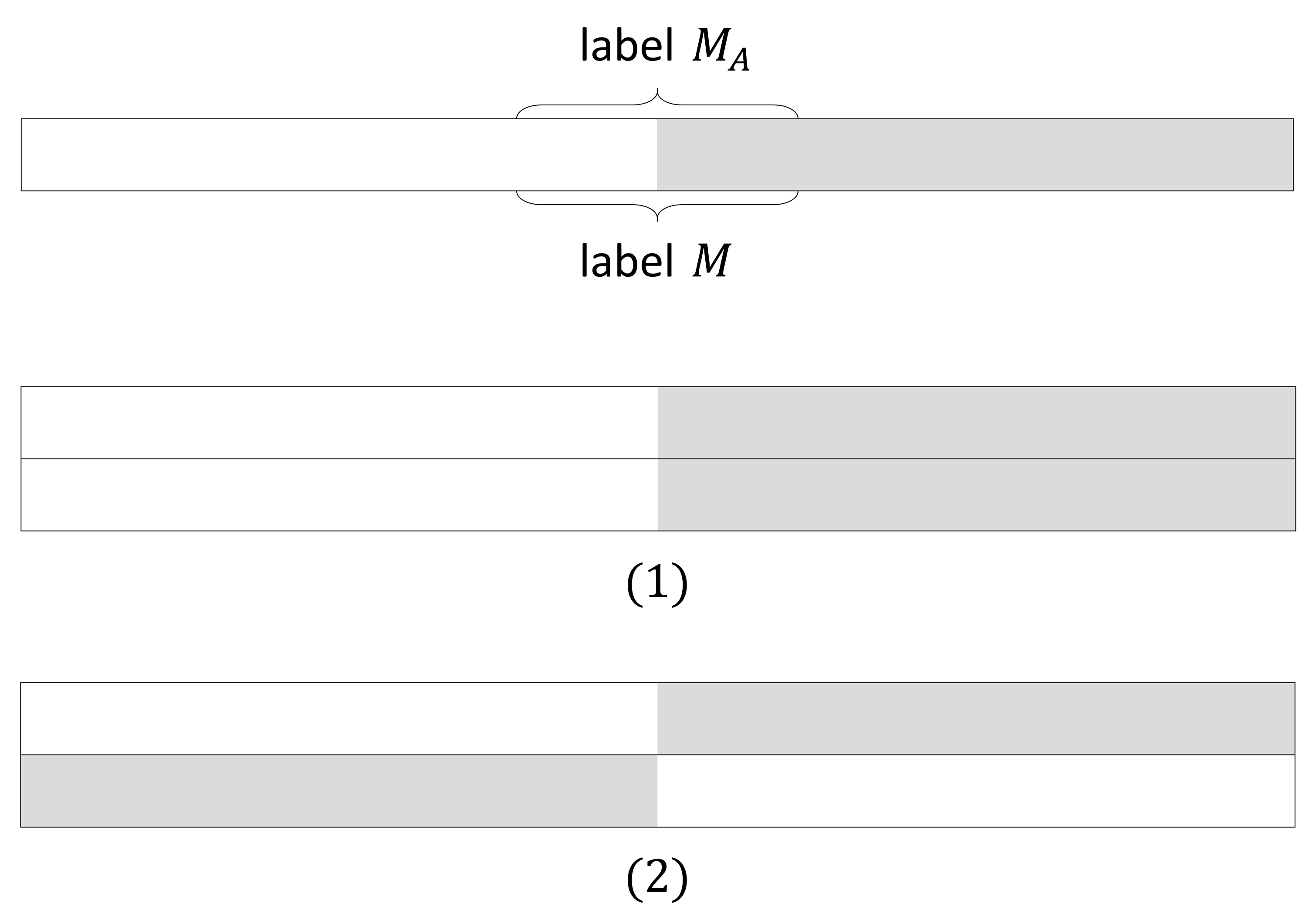}
\captionsetup{width=\textwidth}
\caption{Two possibilities for covering the labels $M$ }
\label{fig:Figure22}
\end{figure}

We can observe that the second configuration cannot be used since it forces the labels $Y$ to match with labels $J_{0N}$ or $J_{1N}$, which is invalid.
Placing the second \emph{rod} as the first configuration leaves another set of labels $M$ exposed.
Repeating the previous process, we can see that the \emph{rods} must continue infinitely in one direction(Figure $23$).

\begin{figure}[ht]
\centering
\includegraphics[width=0.7 \textwidth]{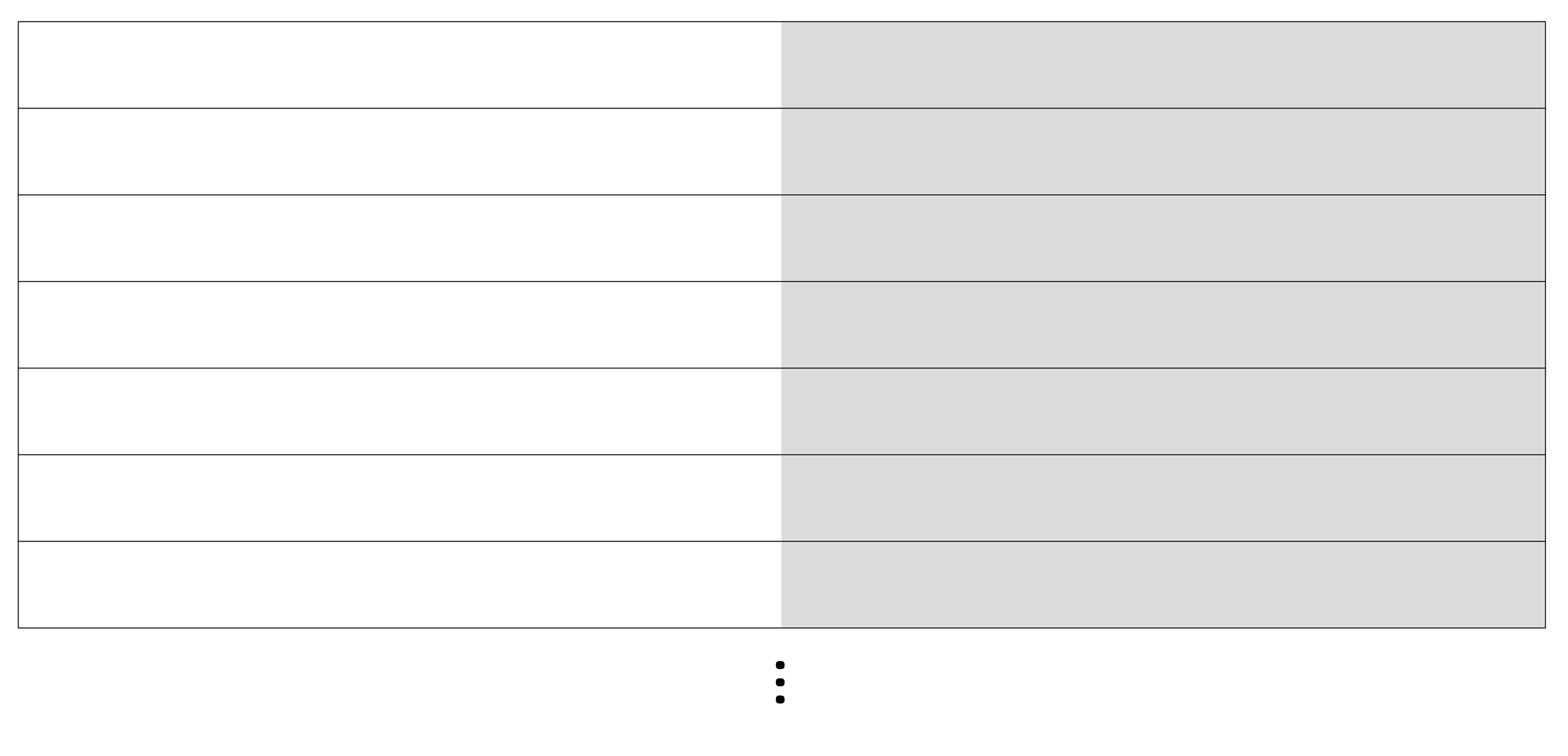}
\captionsetup{width=\textwidth}
\caption{The forced tiling of the \emph{rods}}
\label{fig:Figure23}
\end{figure}

Consider the short side of these \emph{rods}. Labels $0$, $1$ or $N$ cannot match with another label $0$, $1$ or $N$.
Therefore every short side of the \emph{rods} must match with the long side of a \emph{rod}.
In order to cover the short sides in Figure $23$, a \emph{rod} must be placed vertically along the short sides.
Regardless of where this \emph{rod} is placed, it creates a dent only consisting of short sides(Figure $24$).
This space cannot be filled, thus leading to a contradiction.
\end{proof}

\begin{figure}[ht]
\centering
\includegraphics[width=0.7 \textwidth]{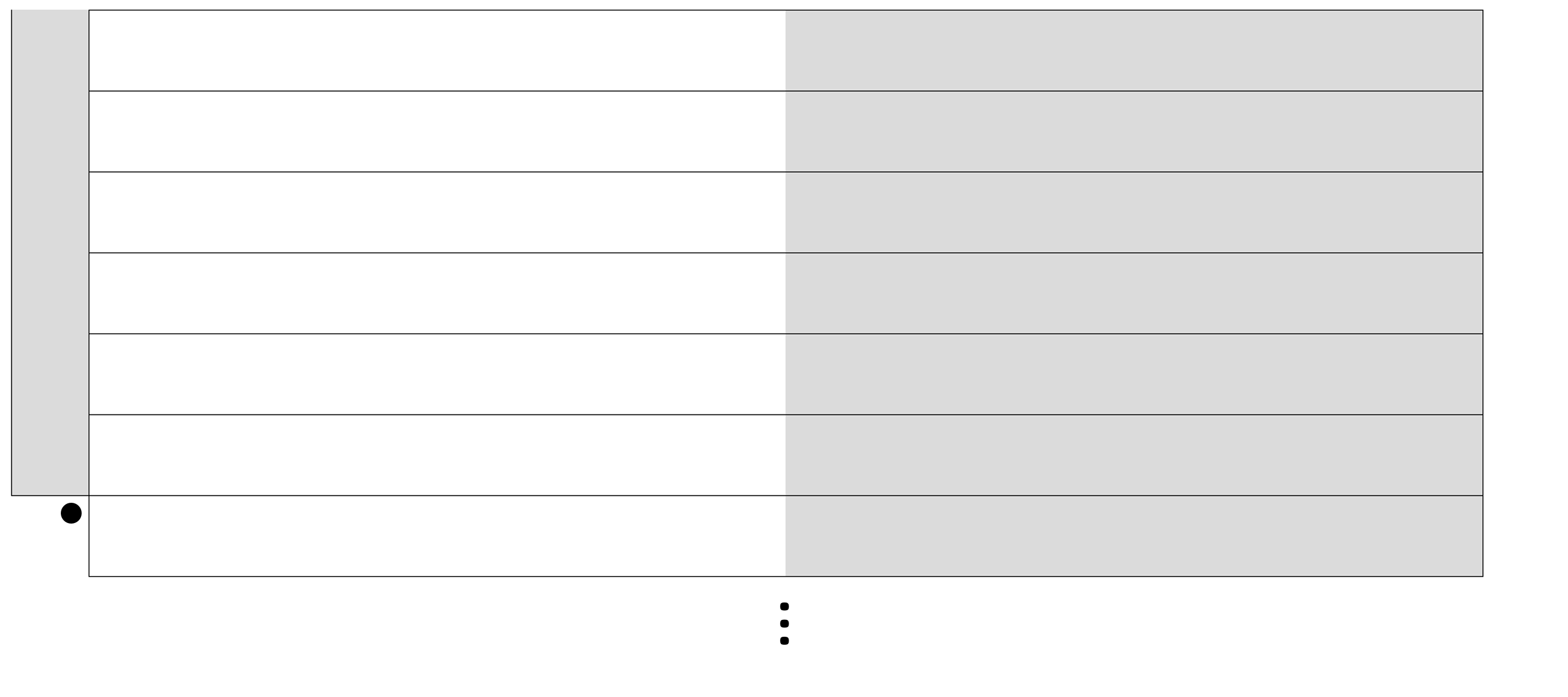}
\captionsetup{width=\textwidth}
\caption{The forced tiling of the \emph{rods} creating a dent that cannot be filled}
\label{fig:Figure24}
\end{figure}

By Lemma 3.2, a \emph{blade} must be used in the tiling.
Once a \emph{blade} is placed, we can determine the tiling around the \emph{blade} with the process below:

\begin{figure}[ht]
\centering
\includegraphics[width=0.8 \textwidth]{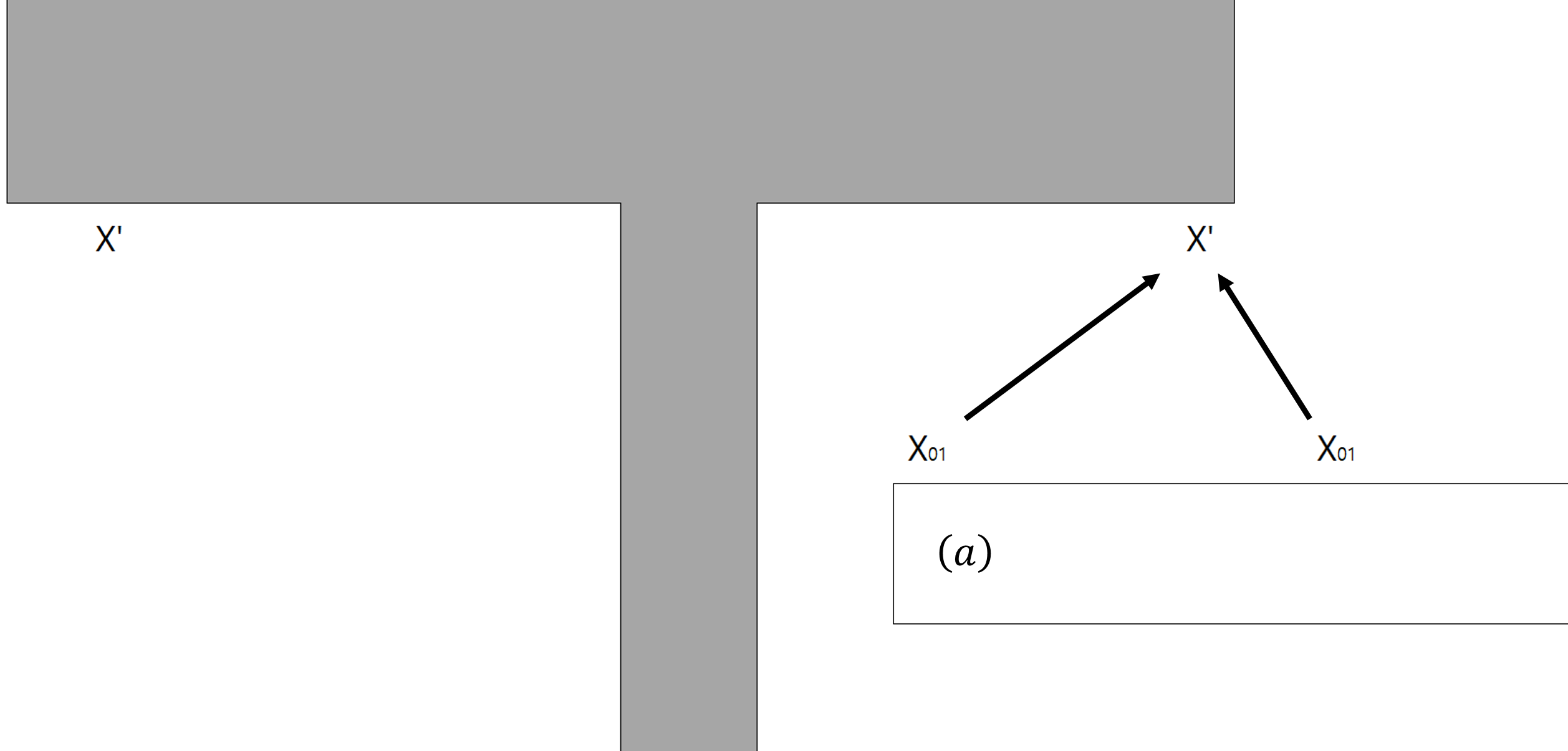}
\captionsetup{width=\textwidth}
\caption{Tiling possibilities around the label $X'$}
\label{fig:Figure25}
\end{figure}

\begin{itemize}
\setlength{\leftskip}{-3.5em}
\item[] \textbf{(a)} The label  $X'$ on the \emph{blade} can only match with one of the labels $X_{01}$ on a \emph{rod}.
Matching one of the labels $X_{01}$ with the label $X'$, we have chosen the Wang tile that this area will represent.
From this point, the position of every polyomino in this area is uniquely determined.
\end{itemize}

\begin{figure}[ht]
\centering
\includegraphics[width=0.7 \textwidth]{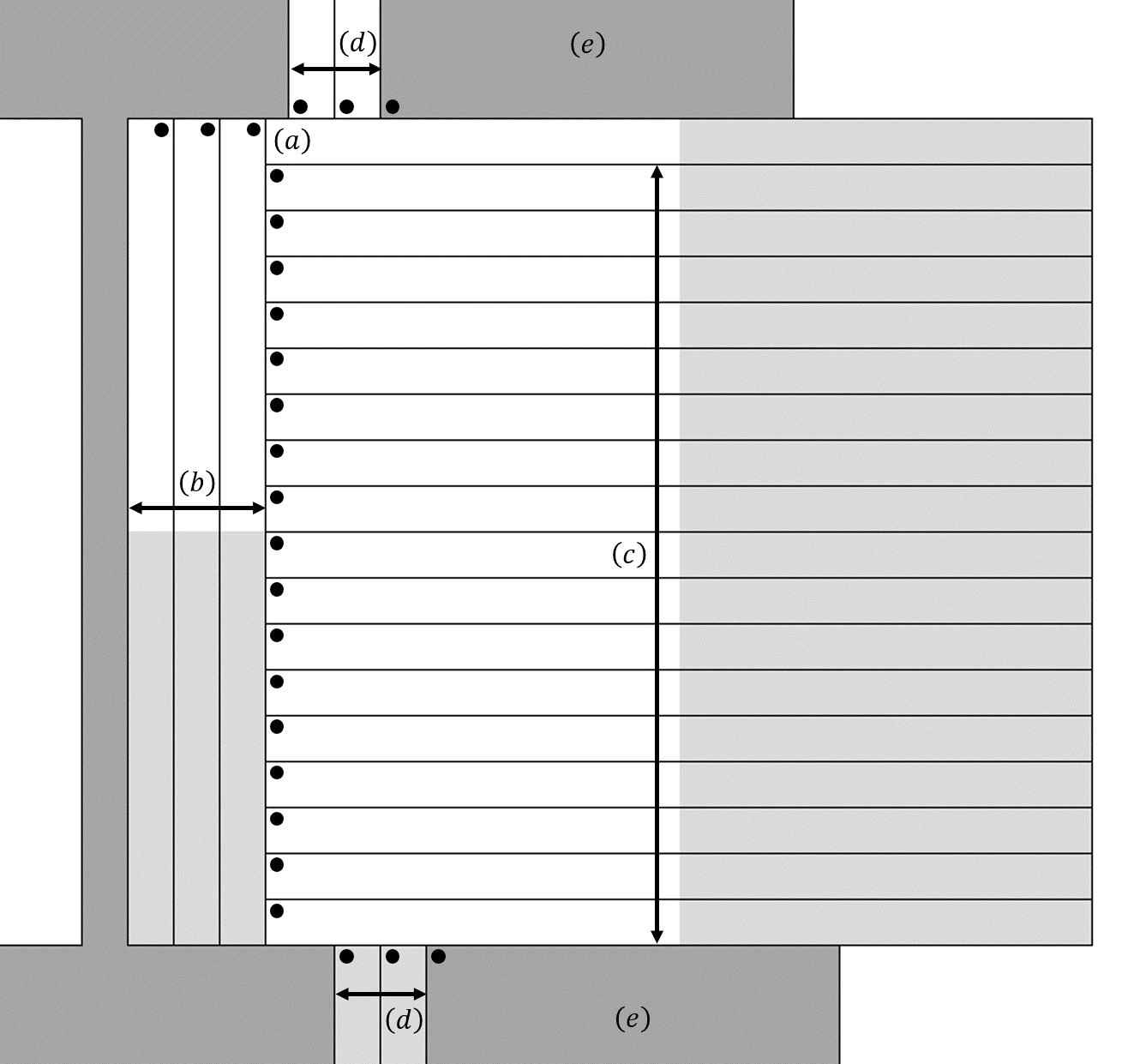}
\captionsetup{width=0.7 \textwidth}
\caption{Tiling possibilities around one side of a \emph{blade}; dots represent the space being analyzed in each step.}
\label{fig:Figure26}
\end{figure}

\begin{itemize}
\setlength{\leftskip}{-3.5em}

\item[] \textbf{(b)} Only vertical \emph{rods} can fit in the dent between \emph{rod} (a) and the \emph{blade}.
To match with the left side of (a), all vertical \emph{rods} are forced to an orientation where their label $Y$ faces the \emph{blade}.

\item[] \textbf{(c)} The remaining edges of the rightmost \emph{rod} of (b) must be covered by horizontal \emph{rods}.
This is because if a horizontal \emph{blade} is placed here,
the labels $X'$ force the \emph{blade} to leave an odd number of consecutive labels on the \emph{rod}, which cannot be covered.
A vertical \emph{blade} or a vertical \emph{rod} cannot fit.\\
To match with the bottom side of (a), all horizontal \emph{rods} are forced to an orientation where their label $X_{01}$ faces (a).

\item[] \textbf{(d)} The top edges of \emph{rod} (a) to the right of the \emph{blade} can only be filled by $(m-1)$ vertical \emph{rods}.
Let us show that every dot must be covered by a vertical \emph{rod}.\\
A horizontal \emph{rod} cannot cover these dots because it results in at least one label $M_A$ from \emph{rod} (a) forming an invalid match.\\
A horizontal \emph{blade} cannot cover the first dot because two \emph{blades} cannot match;\\
it also cannot cover the $(i+1)$th dot while a vertical \emph{rod} covers the $i$th dot since this matching corresponds to leaving $0$ labels on one side of the \emph{rod}, which is prevented by the labels $X'$.\\
A vertical \emph{blade} cannot be placed because the distance between labels $X_{01}$ on \emph{rod} (a) is greater than $2(m-1)$,
therefore labels $X'$ of two vertical \emph{blades} must also be more than $2(m-1)$ labels apart.\\
In the same way, the space below the bottommost \emph{rod} of (c) must be filled by $(m-1)$ \emph{rods}.

\item[] \textbf{(e)} If the upper vertical \emph{rods} in (d) continue past $(m-1)$ \emph{rods},
the label $N$ of the $m$th vertical \emph{rod} has to match with the label $X_{01}$ of \emph{rod} (a), which is invalid.
Therefore, only $(m-1)$ \emph{rods} can be placed, leaving the label $X_{01}$ not covered.
For the same reason as (d), a horizontal \emph{rod} or a horizontal \emph{blade} cannot be used to cover this label.
Therefore, the label $X_{01}$ must be covered by a vertical \emph{blade}.\\
In the same way, a vertical \emph{blade} must be placed below the bottommost \emph{rod} of (c).
\end{itemize}

We can also apply the process (a)$\textendash$(e) to the label $X'$ on the left of the \emph{blade} (Figure 25).
This determines the position of two more \emph{blades}.
We obtain the positions of all other polyominoes in the plane by repeatedly applying the process (a)$\textendash$(e) on the new \emph{blades}.
This shows that every tiling by the \emph{tooth, rod} and \emph{blade} corresponds to a tiling by their encoded Wang tiles.
Therefore the \emph{tooth, rod} and \emph{blade} can tile the plane if and only if the encoded set of Wang tiles can tile the plane.

\begin{theorem} %theorem 3.3
The $5$-Polyomino translation tiling problem is undecidable.
\end{theorem}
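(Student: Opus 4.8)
The plan is to reduce from Theorem 3.1. Fix a set $\tau$ of Wang tiles and let $\mathrm{tooth}$, $\mathrm{rod}$, $\mathrm{blade}$ be the three polyominoes that encode $\tau$ as constructed in Section 3; by Theorem 3.1 and the analysis of Section 3.4, these three polyominoes tile the plane with rotations allowed if and only if $\tau$ tiles the plane. From them I would build five polyominoes for the \emph{translation} problem: the $\mathrm{tooth}$ itself, which is invariant under a $90^\circ$ rotation so that permitting rotations never gave it any extra placements; a horizontal and a vertical copy of the rod, $\mathrm{rod}_H := \mathrm{rod}$ and $\mathrm{rod}_V$ (its quarter turn); and likewise $\mathrm{blade}_H := \mathrm{blade}$ and $\mathrm{blade}_V$. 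The statement to prove is that this five-element set tiles the plane by translation if and only if $\tau$ tiles the plane; since $\tau \mapsto \{\mathrm{tooth}, \mathrm{rod}_H, \mathrm{rod}_V, \mathrm{blade}_H, \mathrm{blade}_V\}$ is computable and the domino problem is undecidable, the $5$-Polyomino translation tiling problem is then undecidable.

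One direction is immediate: each of the five tiles is a rotated copy of one of $\mathrm{tooth}$, $\mathrm{rod}$, $\mathrm{blade}$, so any translation tiling by the five is in particular a rotation-allowed tiling by the three, whence, by Theorem 3.1, $\tau$ tiles the plane. For the converse I would re-run the argument of Sections 3.2--3.4 with the five fixed-orientation tiles in place of the three rotatable ones. Since each fixed-orientation tile has strictly fewer legal placements than the rotatable piece it was cut from, Lemma 3.2 and the forcing steps (a)--(e) still go through, so a tiling by the five tiles exists precisely when the structure of Figure 17 can be realized, i.e. precisely when $\tau$ tiles the plane. Equivalently: if $\tau$ tiles the plane, the tiling of Figure 17 supplied by Section 3.4 uses each rod in one of the two orientations $\mathrm{rod}_H$, $\mathrm{rod}_V$ and each blade in one of $\mathrm{blade}_H$, $\mathrm{blade}_V$, so that same tiling is a translation tiling by the five tiles.

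The one thing that genuinely needs checking -- and the main obstacle -- is the orientation bookkeeping: that Section 3 never secretly requires a rod or a blade in a third orientation. (A $180^\circ$-flipped rod is a different polyomino, since the top long side of the rod carries $X_{01}$, $I$, $x_A$, $M_A$-type labels while the bottom carries $Y$, $J$, $y_A$, $M$-type labels.) For the rod this reduces to the observation that its two short ends carry the opposite labels $0$ and $1$: a wire rod reads the information exposed on a meat rod on one side and transmits the bit-swapped string on the other \emph{without being rotated} (Figures 20--21), so all horizontal rods in a valid tiling share one orientation, all vertical rods share one orientation, and the meat- and filler-rod arrangements of Figures 14--15 respect this. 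For the blade one checks that the pattern of Figure 13 places it in only two orientations, and that the forcing steps (a)--(e), which only ever pin a rod or a blade in a \emph{base} orientation into a specified cell, never create a flipped copy. Granting these checks, the five-tile translation problem and the three-tile rotation problem have identical tiling sets; everything else -- that none of the label adjacencies in Tables 2--3 is disturbed, and that the boundary words of Table 5 remain consistent -- is routine.
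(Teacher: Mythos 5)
Your overall strategy matches the paper's: list the orientations actually used by the canonical tiling of Section 3, take one polyomino per orientation, and note that any translation tiling by these is in particular a rotation-allowed tiling by the three tiles of Theorem 3.1, while conversely the canonical tiling itself uses only the listed orientations. The gap is in the orientation bookkeeping, which you rightly flag as ``the one thing that genuinely needs checking'' and then resolve incorrectly: the correct set is $1$ \emph{tooth} $+$ $3$ \emph{rods} $+$ $1$ \emph{blade}, not your $1 + 2 + 2$.

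Concretely, your claim that ``all vertical rods in a valid tiling share one orientation'' contradicts the information-reading mechanism of Section 3.3. A \emph{wire rod} carries the label $0$ on one short end and the label $1$ on the other, and a stored bit is read precisely by which of those two ends is forced to face the \emph{meat rod}: the paper states that ``the wire rods can only be placed in one of the two orientations determined by the stored information'' (Figure 20). Those two placements differ by a half-turn, so the wire rods alone already occur in two distinct translation classes, and the meat/filler rods, which lie perpendicular to them, supply a third. With only a single quarter-turned copy $\mathrm{rod}_V$ available, at most one bit value could ever be read along that axis, so for a general Wang tile set the canonical tiling is not a translation tiling by your five polyominoes and the ``if'' direction of your reduction fails. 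Your second blade orientation does not repair this: the blades in the structure of Figure 13 are all translates of one another (the forcing steps (a)--(e) only ever produce further blades in the same orientation), so $\mathrm{blade}_V$ is never used. The fix is simply to replace $\{\mathrm{rod}_H,\mathrm{rod}_V,\mathrm{blade}_H,\mathrm{blade}_V\}$ by the three rod orientations and the single blade orientation that Figure 17 actually exhibits; the rest of your argument then goes through as in the paper.
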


\begin{proof}
In order to obtain the tiling structure in Theorem 3.1 without rotations, $1$ \emph{tooth}, $3$ \emph{rods} and $1$ \emph{blade} are required.
It is apparent that any tiling obtained from the translations of these $5$ polyominoes is also a tiling by the \emph{tooth, rod} and \emph{blade}, allowing rotations.
Thus the result in Section 3.4 holds, and these $5$ polyominoes can tile the plane if and only if the encoded set of Wang tiles can tile the plane. 
\end{proof}

\section{Conclusion} %chapter 4
In this paper, we construct a set of polyominoes encoding an arbitrary set of Wang tiles to prove the undecidability of the $5$-Polyomino translation tiling problem.
It is worth noting that the $KL$ labeling technique is a major part of this construction. 
Transforming the shapes of dents on polyominoes using a polyomino much smaller than the dents is a new approach regarding polyomino constructions.
It is likely that $KL$ labeling can be utilized in other polyomino problems, as it is an effective way to force an arbitrary matching rule.

The result of this paper currently holds the record for the minimum number of shapes required to force aperiodicity or undecidability by translation. It will be interesting to investigate if a similar approach outside of polyominoes can further decrease the number of shapes required.

\end{document}